\documentclass[12pt]{amsart}

\usepackage{amsmath, amssymb,amsxtra}
\usepackage{amsthm}

\theoremstyle{plain} 
\newtheorem{theorem}{\sc Theorem}[section] 
\newtheorem{lemma}[theorem]{\sc Lemma}
\newtheorem{corollary}[theorem]{\sc Corollary}
\newtheorem{proposition}[theorem]{\sc Proposition}

\theoremstyle{definition} 
\newtheorem{definition}[theorem]{\sc Definition}
\newtheorem{remark}[theorem]{\sc Remark}
\newtheorem{example}[theorem]{\sc Example}

\makeatletter 

\title[]{Further study of modulation spaces as 
Banach algebras}
\author{
Hans G. Feichtinger, Masaharu Kobayashi, Enji Sato}

\address{
Hans G. Feichtinger \\
Faculty of Mathematics, 
University of Vienna, 
Oskar-Morgenstern-Platz 1, A-1090 Wien, Austria 
and
Acoustic Research Institute, 
OEAW, Vienna, Austria
}
\email{hans.feichtinger@univie.ac.at}

\address{
Masaharu Kobayashi \\
Department of Mathematics, 
Hokkaido University,
Kita 10, Nishi 8, Kita-Ku, Sapporo, 
Hokkaido, 060-0810, Japan}
\email{m-kobayashi@math.sci.hokudai.ac.jp}

\address{Enji Sato \\
Faculty of Science,
Yamagata University,
Kojirakawa 1-4-12, Yamagata-City,
Yamagata 990-8560,
Japan}
\email{esato@sci.kj.yamagata-u.ac.jp}

\makeatother
\keywords{Modulation spaces, Fourier-Wermer algebra,
Set of spectral synthesis, Segal algebra}
\subjclass[2010]{42B35, 43A45, 42B10}
\date{\today}


\begin{document}
\maketitle

\begin{center}
\textit{Dedicated to the 60th birthday of Professor Ferenc Weisz}
\end{center}

\begin{abstract}
This paper discusses 
spectral synthesis for those 
modulation spaces $M^{p,q}_s({\mathbf R}^n)$
which form Banach algebras under pointwise multiplication.
An important argument will be the ``ideal theory for  Segal algebras''   by H.~Reiter \cite{Reiter}. 
This paper is a continuation of our   paper \cite{Feichtinger-Kobayashi-Sato} where the case $q=1$ is treated.
As a by-product   we obtain a variant of  Wiener-L\'evy theorem for $M^{p,q}_s({\mathbf R}^n)$ and Fourier-Wermer algebras ${\mathcal F}L^q_s({\mathbf R}^n)$. 

\end{abstract}


\section{Introduction}
Spectral synthesis is one of the important topics
in classical harmonic analysis. It is concerned with the
question whether spectral synthesis holds
for a given subset $E$ of ${\mathbf R}^n$
and for a given Banach space $ (B,\|\cdot\|_B)$.
More precisely, we assume that  $B$ be a  Banach space of continuous, complex-valued functions, i.e. 
$B \subset C_b({\mathbf R}^n)$, 
with $C^\infty_c({\mathbf R}^n)$  dense in $B$, 
and convergence in $B$ implying pointwise convergence.
For every closed subset $E$ of ${\mathbf R}^n$,
we set $I(E) = \{ f \in B ~|~ f|_E=0 \}$
and define
$J(E)$ as the closure in $B$ of the set
$$
 \{ f \in C^\infty_c({\mathbf R}^n) ~|~ f=0 \
{\rm in~a~neighborhoof~of~} E \},
$$
where $f |_E=0$ means that $f(x)=0$ for all $x \in E$.
If $I(E) = J(E)$, then
$E$ is called a {\it set of spectral synthesis for} $B$.
There are many papers devoted to study the spectral synthesis
(e.g., \cite{Domar}, \cite{Katznelson}, \cite{Reiter}, \cite{Reiter-Stegeman 2000}, \cite{Rudin}).
Let  $A({\mathbf R}^n)$
be the Fourier algebra of all functions on ${\mathbf R}^n$ 
which are the Fourier transforms of 
functions in $L^1({\mathbf R}^n)$,
also denoted by $ {\mathcal F}L^1({\mathbf R}^n)$ elsewhere.
A famous and pioneering result by Schwartz \cite{Schwartz}
shows that  the unit sphere  $S^{n-1} = \{ x \in {\mathbf R}^n ~|~ |x| =1 \} $ 
is not a set of spectral synthesis for 
$A({\mathbf R}^n)$,
if $n \geq 3$.
Surprisingly, Herz \cite{Herz} showed that the unit  
circle  $S^1$ is a set of spectral synthesis for $A({\mathbf R}^2)$.
Motivated by these results, 
Reiter \cite{Reiter} showed similar result in the Fourier-Beurling algebras ${\mathcal F}L^1_s({\mathbf R}^n)$:
If $n \geq 3$, then $S^{n-1}$ is not a set of spectral synthesis for 
${\mathcal F}L^1_s ({\mathbf R}^n)$, and 
$S^1$ is a set of spectral synthesis for 
${\mathcal F}L^1_s({\mathbf R}^2)$ if $0 \leq s <  1/2$,
but not if $s \geq 1/2$.
Moreover, Kobayashi-Sato \cite{Kobayashi Sato 4} considered 
the spectral synthesis of $S^{n-1}$ for the  Fourier-Wermer algebra ${\mathcal F}L^q_s({\mathbf R}^n)$:
Let $q^\prime$ denote the conjugate exponents of $q$.
If $1<q<\infty$ and $s> n/q^\prime$, then $S^{n-1}$ $(n \geq 3)$ 
is not a set of spectral synthesis for 
${\mathcal F}L^q_s ({\mathbf R}^n)$, and 
$S^1$ is a set of spectral synthesis for 
${\mathcal F}L^q_s({\mathbf R}^2)$ if  $1<q<2$
and
$ 2/q^\prime  < s <  2/q^\prime +  1/2$,
but not if $1 <q< \infty$ and $s > 2/q^\prime + 1/2$.

The modulation spaces $M^{p,q}_s({\mathbf R}^n)$ 
are one of the function spaces
introduced by Feichtinger \cite{Feichtinger}.
The  definition of $M^{p,q}_s({\mathbf R}^n)$ will be given in Section \ref{def modulation spaces}. 
The main idea for these spaces is to consider 
the space and the frequency variable simultaneously. 
In some  sense, they behave like the Besov spaces $B^{p,q}_s({\mathbf R}^n)$. 
But they  appear to be better suited for the  description of
 problems in the area of time-frequency analysis
and are often a good substitute for the usual
spaces  $L^p({\mathbf R}^n)$ or $B^{p,q}_s({\mathbf R}^n)$
(see \cite{Feichtinger-Strohmer}, \cite{Grochenig 2001}).

The aim of this  paper is to understand certain 
sets of spectral synthesis for
$M^{p,q}_s({\mathbf R}^n)$.
Our first result is:
\begin{theorem}
\label{relation between set of spectral synthesis}
\label{spectral synthesis for Mpqs}
Let $1 \leq p \leq 2$. Suppose that
$q=1$ and $s \geq 0$, or $1<q \leq p^\prime$ and
$s> n / q^\prime$.
Then for  any compact subset  $K$ of ${\mathbf R}^n$,
$K$ is a set of spectral synthesis for $M^{p,q}_s({\mathbf R}^n)$
if and only if $K$ is a set of spectral synthesis for 
${\mathcal F}L^q_s({\mathbf R}^n)$.
\end{theorem}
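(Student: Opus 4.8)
The plan is to reduce the global statement to a purely local comparison of the two algebras and then to exploit the fact that spectral synthesis of a compact set is a local property. The key structural input I would establish (or invoke from the theory of modulation spaces) is a \emph{local coincidence lemma}: under the stated hypotheses on $p,q,s$ both $M^{p,q}_s({\mathbf R}^n)$ and ${\mathcal F}L^q_s({\mathbf R}^n)$ are Banach algebras containing $C^\infty_c({\mathbf R}^n)$ as a dense subspace, and for every fixed compact set $Q$ there are constants $c,C>0$ with
$$
c\,\|f\|_{{\mathcal F}L^q_s} \;\le\; \|f\|_{M^{p,q}_s} \;\le\; C\,\|f\|_{{\mathcal F}L^q_s}
\qquad \text{for all } f \text{ with } \operatorname{supp} f \subseteq Q .
$$
This is exactly the assertion that the two spaces have the same local component; the weight condition $s>n/q'$ (resp.\ $q=1$, $s\ge 0$) together with $1\le p\le 2$ and $q\le p'$ is what makes both spaces Banach algebras and guarantees this equivalence. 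I would also record the elementary consequences that multiplication by a fixed $\phi\in C^\infty_c({\mathbf R}^n)$ is a bounded operator on each of the two algebras, and that any $h$ in either algebra which vanishes on a neighborhood of $K$ already lies in the corresponding $J(K)$ (approximate $h$ by $C^\infty_c$ functions and multiply by a cutoff equal to $1$ off that neighborhood).

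With these tools the two implications are symmetric, so I describe one. Assume $K$ is a set of spectral synthesis for ${\mathcal F}L^q_s({\mathbf R}^n)$ and take $f\in I_{M^{p,q}_s}(K)$, i.e.\ $f\in M^{p,q}_s$ with $f|_K=0$. Choose $\phi\in C^\infty_c({\mathbf R}^n)$ with $\phi\equiv 1$ on a neighborhood of $K$ and split $f=\phi f+(1-\phi)f$. The term $(1-\phi)f$ vanishes on a neighborhood of $K$ and hence lies in $J_{M^{p,q}_s}(K)$ by the remark above. The term $\phi f$ is compactly supported and vanishes on $K$; by the local coincidence lemma it belongs to $I_{{\mathcal F}L^q_s}(K)$, so by hypothesis there are $g_j\in C^\infty_c({\mathbf R}^n)$ vanishing near $K$ with $g_j\to \phi f$ in ${\mathcal F}L^q_s$.

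The one delicate point is that the supports of the $g_j$ need not stay inside a fixed compact set, whereas the local coincidence lemma only compares norms on a fixed $Q$. To repair this I would fix $\eta\in C^\infty_c({\mathbf R}^n)$ with $\eta\equiv 1$ on $\operatorname{supp}\phi$ and replace $g_j$ by $\eta g_j$. Then $\eta g_j\in C^\infty_c$ still vanishes near $K$, all $\eta g_j$ and $\phi f=\eta\phi f$ are supported in the fixed compact set $Q=\operatorname{supp}\eta$, and boundedness of multiplication by $\eta$ on ${\mathcal F}L^q_s$ gives $\eta g_j\to\phi f$ in ${\mathcal F}L^q_s$. Applying the local norm equivalence on $Q$ upgrades this to convergence $\eta g_j\to\phi f$ in $M^{p,q}_s$, whence $\phi f\in J_{M^{p,q}_s}(K)$. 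Adding the two pieces gives $f\in J_{M^{p,q}_s}(K)$, so $I_{M^{p,q}_s}(K)\subseteq J_{M^{p,q}_s}(K)$; the reverse inclusion is automatic since convergence in the space forces pointwise convergence, and thus $K$ is a set of spectral synthesis for $M^{p,q}_s$. Interchanging the roles of the two algebras proves the converse.

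I expect the genuine obstacle to be the local coincidence lemma itself — that is, showing that confining the support to a fixed compact set washes out the $p$-dependence of the modulation-space norm and leaves only the ${\mathcal F}L^q_s$ behaviour in the frequency variable — which rests on short-time Fourier transform estimates and is the place where the hypotheses $1\le p\le 2$, $q\le p'$ and $s>n/q'$ are actually consumed. The localization bookkeeping above is routine once that lemma and the boundedness of smooth cutoff multipliers are in hand.
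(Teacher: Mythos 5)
Your argument is correct, but it takes a genuinely different route from the paper. The paper proceeds via Reiter's ideal theory for Segal algebras: using the \emph{global} embedding $M^{p,q}_s({\mathbf R}^n)\hookrightarrow {\mathcal F}L^q_s({\mathbf R}^n)$ (Lemma \ref{inclusion Mpqs subset FLqs}, proved by Hausdorff--Young -- this is where $1\le p\le 2$ and $q\le p^\prime$ are actually consumed) together with $({\mathcal F}L^q_s)_c\hookrightarrow M^{p,q}_s({\mathbf R}^n)$ (Lemma \ref{inclusion relation (FLqs)_c}), it establishes a bijection $I_F\mapsto I_F\cap M^{p,q}_s({\mathbf R}^n)$ between the lattices of \emph{all} closed ideals of the two algebras (Proposition \ref{ideal theorem}, Theorem \ref{ideal theorem of segal algebra}) and then reads off the theorem by matching $I(K)$ and $J(K)$ on both sides. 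You instead localize with cutoffs and use only the local coincidence of the two algebras on a fixed compact support set, exploiting that synthesis of a compact set is a local notion; your bookkeeping (the splitting $f=\phi f+(1-\phi)f$, the replacement $g_j\mapsto\eta g_j$ to trap supports in a fixed $Q$, and the observation that $J_0(K)\subset J(K)$) is sound, and the reverse inclusion $J(K)\subseteq I(K)$ follows as you say from pointwise convergence. Your key lemma is genuine: it is precisely the identity $({\mathcal F}L^q_s)_c=\{f\in M^{p,q}_s({\mathbf R}^n)\ |\ \operatorname{supp}f \text{ compact}\}$ with equivalent norms, which the paper itself quotes as \eqref{equality of compact supported function space} from Okoudjou and Kato--Sugimoto--Tomita, and which under the stated hypotheses also follows by combining Lemmas \ref{inclusion Mpqs subset FLqs} and \ref{inclusion relation (FLqs)_c}. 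One inaccuracy in your closing paragraph: the local equivalence does \emph{not} consume $1\le p\le2$ and $q\le p^\prime$. For $\operatorname{supp}f\subseteq Q$ and a window $\phi\in C^\infty_c({\mathbf R}^n)$ one has $|\widehat f(\xi)|\lesssim \|V_\phi f(\cdot,\xi)\|_{L^1_x}\lesssim_Q \|V_\phi f(\cdot,\xi)\|_{L^p_x}$ (the $x$-support of $V_\phi f$ lies in a fixed compact set) and conversely $\sup_x|V_\phi f(x,\xi)|\lesssim (|\widehat f|*|\widehat\phi\,|)(\xi)$ plus weighted Young, so the equivalence holds for every $1\le p\le\infty$ and $s\ge 0$; those hypotheses are consumed by the paper's global embedding, which drives the Segal-algebra machinery. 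The trade-off: the paper's route yields the stronger structural statement (a full correspondence of closed ideals, of independent interest), while your route is more elementary and, since your local lemma is $p$-independent, would in fact give the spectral-synthesis equivalence for all $1\le p<\infty$, beyond the range stated in the theorem.
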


\begin{remark}
The case $q=1$ and $s \geq 0$ is given in
\cite{Feichtinger-Kobayashi-Sato}.
\end{remark}

Theorem \ref{relation between set of spectral synthesis} allows to 
give some concrete examples of 
sets of spectral synthesis for $M^{p,q}_s({\mathbf R}^n)$
(cf. \cite{Kobayashi Sato 4} or 
Proposition \ref{one point} below).
\begin{example}
$(i)$ 
Let $1 \leq p \leq 2$, $1<q \leq 2$ and $n / q^\prime < s  < n / q^\prime +1 $.
Then single points of ${\mathbf R}^n$ are sets of spectral 
synthesis for $M^{p,q}_s ({\mathbf R}^n)$.
\,\, 
$(ii)$ Let $1 \leq p \leq 2$, $1<q< \infty$ and $s >
n/q^\prime  + 1$.
Then single points of ${\mathbf R}^n$ are not 
sets of spectral synthesis for  $M^{p,q}_s({\mathbf R}^n)$.
\end{example}

\begin{example}
$(i)$
Let $1 \leq p \leq 2$, $1<q \leq p^\prime$ and  
$ 2 / q^\prime <s< 2 / q^\prime  + 1 / 2$.
Then $S^1$ is a set of spectral
synthesis for $M^{p,q}_s({\mathbf R}^2)$.
\,\, 
$(ii)$
But for $1 \leq p \leq 2$, $1<q <2$ and  
$s >  2 / q^\prime  + 1 / 2$ $S^1$ is not a set of spectral
synthesis for $M^{p,q}_s({\mathbf R}^2)$.
\,\, 
$(iii)$
Let $1 \leq p \leq 2$, $1<q   \leq p^\prime  $ and  
$s > n / q^\prime$.
Then $S^{n-1}$ $(n \geq 3)$ is not a set of spectral
synthesis for $M^{p,q}_s({\mathbf R}^n)$.
\end{example}

As a by-product of Theorem \ref{relation between set of spectral synthesis}  we obtain a variant of  Wiener-L\'evy theorem for ${\mathcal F}L^q_s({\mathbf R}^n)$ and $M^{p,q}_s({\mathbf R}^n)$. 
\begin{theorem}
\label{wiener-levy for FLqs}
Given $1<q<\infty$, $s> n/q^\prime$ and a real-valued 
function $f  \in  {\mathcal F}L^q_s({\mathbf R}^n)$.
Suppose that $F$ is an analytic function on a  neighborhood 
of $f({\mathbf R}^n) \cup \{ 0 \}$ with $F(0)=0$.
Then there exists $g \in {\mathcal F}L^q_s({\mathbf R}^n)$
such that 
$g(x) =F(f (x))$.
\end{theorem}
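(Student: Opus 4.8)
The plan is to read Theorem~\ref{wiener-levy for FLqs} as a statement of the holomorphic (symbolic) functional calculus in the commutative Banach algebra $B:=\mathcal{F}L^q_s(\mathbf{R}^n)$, so the proof splits into two parts: identifying the Gelfand spectrum of $B$, and then applying the Cauchy-integral calculus together with the normalization $F(0)=0$. First I would record the elementary structural facts forced by the hypotheses $1<q<\infty$ and $s>n/q'$. By H\"older's inequality $\langle\cdot\rangle^{-s}\in L^{q'}(\mathbf{R}^n)$ precisely when $s>n/q'$, whence $L^q_s(\mathbf{R}^n)\subset L^1(\mathbf{R}^n)$ and therefore $B\subset \mathcal{F}L^1(\mathbf{R}^n)=A(\mathbf{R}^n)\subset C_0(\mathbf{R}^n)$, with continuous inclusions and $C^\infty_c(\mathbf{R}^n)$ dense. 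In particular $B$ is a semisimple commutative Banach algebra under pointwise multiplication, it is non-unital (it contains no constants), and every $f\in B$ vanishes at infinity.

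The decisive point is to show that the maximal ideal space of $B$ is exactly $\mathbf{R}^n$, with all characters being the point evaluations $\delta_x\colon f\mapsto f(x)$. That $\delta_x$ is multiplicative is immediate from pointwise multiplication, so $\mathbf{R}^n\hookrightarrow\Delta(B)$ with the usual topology; what must be excluded are ``characters at infinity.'' Here $B$ is regular and, by the standing density of $C^\infty_c(\mathbf{R}^n)$, Tauberian (the elements with compactly supported spectrum are dense), and for a regular Tauberian semisimple algebra these two properties force $\Delta(B)=\mathbf{R}^n$; this is the input available from the Segal/Beurling-algebra framework used throughout the paper (compare \cite{Reiter}, \cite{Kobayashi Sato 4}, \cite{Feichtinger-Kobayashi-Sato}). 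I expect this step to be the main obstacle, since a priori a character of $B$ need only be continuous for the $B$-norm and not for the weaker $A(\mathbf{R}^n)$-norm, so extra characters are not excluded by soft arguments alone; ruling them out is exactly where regularity plus the Tauberian property are essential.

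With $\Delta(B)=\mathbf{R}^n$ in hand, I would adjoin a unit to form $B^+$, whose spectrum is the one-point compactification $\mathbf{R}^n\cup\{\infty\}$, the extra character $\chi_0$ annihilating $B$. For real-valued $f\in B\subset C_0(\mathbf{R}^n)$ the Gelfand transform extends continuously by $\hat f(\infty)=0$, so $\sigma_{B^+}(f)=f(\mathbf{R}^n)\cup\{0\}$, a compact subset of $\mathbf{R}$ that is precisely the set on whose open neighborhood in $\mathbf{C}$ the function $F$ is assumed holomorphic. The Cauchy integral $g:=F(f)=(2\pi i)^{-1}\oint_\Gamma F(\lambda)(\lambda\,1-f)^{-1}\,d\lambda$, with $\Gamma$ a contour in the domain of $F$ surrounding $\sigma_{B^+}(f)$, then defines an element of $B^+$, and the spectral mapping theorem gives $\chi(g)=F(\chi(f))$ for every $\chi\in\Delta(B^+)$. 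Evaluating at $\chi_0$ shows the scalar component of $g$ equals $F(\chi_0(f))=F(0)=0$, so in fact $g\in B=\mathcal{F}L^q_s(\mathbf{R}^n)$; evaluating at $\delta_x$ and using that the Gelfand transform on $B$ is genuine evaluation yields $g(x)=F(f(x))$, which is the desired conclusion.
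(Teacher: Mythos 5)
Your route is genuinely different from the paper's. The paper never invokes Gelfand theory: it splits $F(f)$ into a ``tail'' and a ``core''. For the tail it picks, via the approximate units of Lemma \ref{approximate unit for FLqs}, a cutoff $\phi$ with $\|f-\phi f\|_{{\mathcal F}L^q_s}<\varepsilon/c$ ($c$ the algebra constant in \eqref{multiplication constant for FLqs}), so that the power series $\sum_j c_j(f-\phi f)^j$ of $F$ at $0$ converges in norm and represents $F(f)$ off ${\rm supp\,}\phi$. For the core it multiplies by a cutoff $\tau_1$, uses the identification $({\mathcal F}L^q_s)_c=(M^{p,q}_s)_c$ of compactly supported elements to move into $M^{p,q}_s$, applies the Reich--Sickel composition theorem (Lemma \ref{composition on Mpqs}) to $G=\psi F\in C^\infty_c({\mathbf R})$, and moves back; the two pieces are then glued with a partition of cutoffs. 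Note what each approach buys: the paper's argument uses the analyticity of $F$ only at $0$ (elsewhere only smoothness enters, through $G\in C^\infty_c$), and it adapts verbatim to $M^{p,q}_s$ (Remark \ref{rem modulation version}), whereas your functional-calculus argument needs $F$ holomorphic on a full complex neighborhood of $f({\mathbf R}^n)\cup\{0\}$ (available here, since a real-analytic function on a neighborhood of this compact set extends holomorphically) and would not transfer to Theorem \ref{wiener-levy for Mpqs} without separately computing the Gelfand spectrum of $M^{p,q}_s({\mathbf R}^n)$.

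There is, however, one genuine soft spot, precisely at the step you yourself flag as the main obstacle. The implication you invoke --- ``$B$ regular and Tauberian, hence $\Delta(B)={\mathbf R}^n$'' --- is circular as stated: both regularity and the Tauberian property are defined \emph{relative to} $\Delta(B)$ (regularity is separation of points and closed sets of $\Delta(B)$ by Gelfand transforms; Tauberian means density of elements whose Gelfand transform has compact support \emph{in} $\Delta(B)$), so neither can be verified, let alone used, before the spectrum is identified. What is actually needed is a direct computation of the characters: a character of $B={\mathcal F}L^q_s$ corresponds to a multiplicative functional on the convolution algebra $L^q_s({\mathbf R}^n)$, hence to some $h\in (L^q_s)^*=L^{q'}_{-s}$ with $h(\xi+\eta)=h(\xi)h(\eta)$ a.e.; measurable exponential solutions are $h(\xi)=e^{z\cdot\xi}$, and membership in $L^{q'}_{-s}$ with the polynomial weight $\langle\cdot\rangle^{-s}$ forces $z$ purely imaginary, so all characters are point evaluations $\delta_x$, $x\in{\mathbf R}^n$. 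This is exactly the Wermer-type analysis \cite{Wermer} (cf.\ \cite{Kobayashi Sato 4}), and once it is in place the remainder of your argument --- adjoining a unit, $\sigma_{B^+}(f)=f({\mathbf R}^n)\cup\{0\}$ (compact since $f\in C_0$), the Cauchy-integral calculus, and $F(0)=0$ killing the scalar component so that $g\in B$ with $g(x)=F(f(x))$ --- is standard and correct. So the proposal is repairable, but as written the decisive step rests on a principle that is not a theorem.
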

\begin{theorem}
\label{wiener-levy for Mpqs}
Given $1<p<\infty$, 
$1<q<\infty$, $s> n/q^\prime$ and a real-valued 
function $f \in M^{p,q}_s({\mathbf R}^n)$.
Suppose that $F$ is an analytic function on a  neighborhood 
of $f({\mathbf R}^n) \cup \{ 0 \}$ with $F(0)=0$.
Then there exists $g \in M^{p,q}_s({\mathbf R}^n)$
such that $g(x) =F(f(x))$. 
\end{theorem}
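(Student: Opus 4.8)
The plan is to prove the statement by the holomorphic functional calculus in the commutative Banach algebra $A := M^{p,q}_s({\mathbf R}^n)$, which under the present hypotheses on $p,q,s$ is a Banach algebra for pointwise multiplication and is continuously included in $C_0({\mathbf R}^n)$. Since $A$ carries no unit, I would first pass to its unitization $A^{+} = A \oplus {\mathbf C}e$. Viewing $A$ as a Segal algebra in the Fourier algebra $A({\mathbf R}^n) = {\mathcal F}L^1({\mathbf R}^n)$ (whose maximal ideal space is ${\mathbf R}^n$), the abstract Segal-algebra theory of Reiter gives that $A$ shares the characters of $A({\mathbf R}^n)$, namely the point evaluations $\delta_x$, $x \in {\mathbf R}^n$; in particular $A$ is semisimple. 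Consequently the maximal ideal space of $A^{+}$ is the one-point compactification ${\mathbf R}^n \cup \{\infty\}$, where $\chi_\infty$ is the character annihilating $A$.

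The next step is to identify the spectrum of $f$ in $A^{+}$. Because $f$ is real-valued and lies in $C_0({\mathbf R}^n)$, its Gelfand transform extends continuously to ${\mathbf R}^n \cup \{\infty\}$ with value $\chi_\infty(f)=0$, so that
\[
\sigma_{A^{+}}(f) \;=\; \widehat{f}\bigl({\mathbf R}^n \cup \{\infty\}\bigr) \;=\; f({\mathbf R}^n) \cup \{0\},
\]
a compact subset of ${\mathbf R}$. By hypothesis $F$ is analytic on a neighbourhood $U$ of exactly this set, so I can choose a contour $\Gamma \subset U \setminus \sigma_{A^{+}}(f)$ winding once around $\sigma_{A^{+}}(f)$ and define
\[
g := F(f) = \frac{1}{2\pi i}\oint_\Gamma F(\lambda)\,(\lambda e - f)^{-1}\,d\lambda \;\in\; A^{+}.
\]
This is the standard holomorphic functional calculus, and it is compatible with the Gelfand transform: $\widehat{g}(\omega) = F\bigl(\widehat{f}(\omega)\bigr)$ at every character $\omega$ of $A^{+}$.

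It remains to place $g$ inside $A$ rather than merely in $A^{+}$ and to read off its pointwise values. Applying $\chi_\infty$ gives $\chi_\infty(g) = F(\chi_\infty(f)) = F(0) = 0$, which is precisely the vanishing of the $e$-component of $g$; hence $g \in A = M^{p,q}_s({\mathbf R}^n)$. Evaluating at the remaining characters yields $g(x) = \delta_x(g) = F(\delta_x(f)) = F(f(x))$ for every $x \in {\mathbf R}^n$, and by semisimplicity this determines $g$ uniquely. The same argument, carried out verbatim in the algebra ${\mathcal F}L^q_s({\mathbf R}^n)$, proves Theorem \ref{wiener-levy for FLqs}.

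I expect the genuine difficulty to lie not in the functional calculus, which is formal once the algebra structure is available, but in the two Banach-algebra inputs I am invoking: that $M^{p,q}_s({\mathbf R}^n)$ is a commutative pointwise Banach algebra continuously embedded in $C_0({\mathbf R}^n)$ and realizable as a Segal algebra in $A({\mathbf R}^n)$, so that its maximal ideal space is ${\mathbf R}^n$, and the attendant computation of $\sigma_{A^{+}}(f)$. The embedding and the algebra property stem from the weight condition $s > n/q'$, exactly as in the Fourier--Beurling case ${\mathcal F}L^q_s \hookrightarrow {\mathcal F}L^1 = A$, where Hölder's inequality yields $\|\langle\cdot\rangle^{-s}\|_{L^{q'}} < \infty$; the facts needed for $M^{p,q}_s$ are those recorded among the preliminaries of Section \ref{def modulation spaces}. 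Once these structural statements are in hand, the spectrum computation reduces to the elementary observation that, for a semisimple function algebra, the spectrum of an element is the closure of the range of its Gelfand transform.
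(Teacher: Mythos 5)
Your functional-calculus skeleton is sound as far as it goes: in the unitization $A^{+}$, the Cauchy integral defines $g$, $\chi_\infty(g)=F(0)=0$ forces $g\in A$, and evaluation at characters gives $g(x)=F(f(x))$. But the entire argument rests on the identification of the maximal ideal space of $M^{p,q}_s({\mathbf R}^n)$ with ${\mathbf R}^n$ (equivalently, on $\sigma_{A^{+}}(f)=f({\mathbf R}^n)\cup\{0\}$), and your justification of this via ``$M^{p,q}_s$ is a Segal algebra in $A({\mathbf R}^n)$'' breaks down. First, the theorem covers $2<p<\infty$, and there the inclusion $M^{p,q}_s({\mathbf R}^n)\subset {\mathcal F}L^1({\mathbf R}^n)$ is simply false: a single-band function $f$ with $\widehat f=\varphi\cdot u$ belongs to $M^{p,q}_s$ as soon as ${\mathcal F}^{-1}(\varphi u)\in L^p$, and for $p>2$ there exist band-limited $L^p$ functions whose Fourier transforms are not locally integrable functions at all. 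This is why the paper's embedding $M^{p,q}_s\hookrightarrow {\mathcal F}L^q_s$ (Lemma \ref{inclusion Mpqs subset FLqs}) is restricted to $1\leq p\leq 2$, $q\leq p^\prime$ --- Hausdorff--Young is exactly where $p\leq 2$ enters. Second, even for $p\leq 2$ the module property required by Reiter/Burnham Segal-algebra theory fails over $A({\mathbf R}^n)$: the correct estimate is $\|fg\|_{M^{p,q}_s}\lesssim \|f\|_{{\mathcal F}L^1_s}\|g\|_{M^{p,q}_s}$ with the \emph{weighted} norm, since multiplication by $e^{i\xi x}\psi(x)$ has ${\mathcal F}L^1$-norm independent of $\xi$ but multiplies $\|\cdot\|_{M^{p,q}_s}$ by $\approx\langle\xi\rangle^s$; and the weight-matched ambient algebra ${\mathcal F}L^1_s$ does not contain $M^{p,q}_s$ (H\"older only gives $M^{p,q}_s\subset {\mathcal F}L^1_{s^\prime}$ for $s^\prime<s-n/q^\prime$). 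So the character space of $M^{p,q}_s$ is not delivered by the theory you invoke, and without it the spectrum could a priori be strictly larger than the closure of the range of $f$ --- that equality (``naturality'' of the algebra) is precisely the hard point, not a citable formality; if it fails, $F$ need not be analytic on $\sigma_{A^{+}}(f)$ and your contour integral cannot even be set up.

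By contrast, the paper avoids spectral theory altogether, which is why it reaches all $1<p<\infty$. It splits the problem: near infinity, the approximate-unit Lemma \ref{approximate unit for Mpqs} makes $\|f-\phi f\|$ smaller than the radius of convergence of $F$ at $0$, so $g_0=\sum_{j\geq 1}c_j(f-\phi f)^j$ converges in the Banach algebra norm and equals $F(f)$ off ${\rm supp~}\phi$; on a compact set it invokes the Reich--Sickel composition theorem (Lemma \ref{composition on Mpqs}) together with the identification \eqref{equality of compact supported function space} of compactly supported elements of ${\mathcal F}L^q_s$ and $M^{p,q}_s$, and then glues the two pieces with cutoffs (Remark \ref{rem modulation version}). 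That route uses only submultiplicativity plus a local composition estimate and is insensitive to $p>2$. To salvage your approach you would have to prove directly that every multiplicative functional on $M^{p,q}_s({\mathbf R}^n)$ is a point evaluation for the full range $1<p<\infty$ --- a statement neither in the paper nor obviously available for $p>2$. For Theorem \ref{wiener-levy for FLqs} alone your route is closer to viable, since ${\mathcal F}L^q_s$ is the Fourier image of Wermer's convolution algebra $L^q_s$, whose Gelfand space is known to be ${\mathbf R}^n$; but the claim that the argument carries over ``verbatim'' to $M^{p,q}_s$ is exactly where the gap lies.
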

The organization of this paper is as follows. 
After a preliminary section devoted to the definitions of
$M^{p,q}_s({\mathbf R}^n)$ and ${\mathcal F}L^q_s ({\mathbf R}^n)$
we prove Theorem \ref{relation between set of spectral synthesis}
in Section \ref{sectioin spectral synthesis}.
Theorems \ref{wiener-levy for FLqs} and
\ref{wiener-levy for Mpqs}
are treated in Section \ref{section Wiener Levy theorem}.

\section{Preliminaries}

The following notation will be used throughout this article. We use $C$ to denote various positive constants which may change from line to line.
We use the notation  $I \lesssim J$ if $I$ is bounded by a constant times $J$
and we denote $I \approx J$ if $I \lesssim J$ and $J \lesssim I$.
The closed ball with center $x_0 \in {\mathbf R}^n$
and radius $r>0$ is defined by
$B_r (x_0) := \{ x \in {\mathbf R}^n ~|~ |x-x_0| \leq r \}$.
Let $\langle x \rangle := (1+|x|^2)^{\frac{1}{2}}$
for $x \in {\mathbf R}^n$.
We define for $1 \leq p < \infty$
and $s \in {\mathbf R}$
$$
\| f \|_{L^p_s} :=
\Big(
\int_{{\mathbf R}^n}
\big(  \langle x \rangle^s  |f(x)|  \big)^p dx
\Big)^{\frac{1}{p}},
$$
and
$ \| f \|_{L^\infty_s} :=
{\rm ess.sup}_{x \in {\mathbf R}^n} \langle x \rangle^s   |f(x)|$.
We simply write $L^p ({\mathbf R}^n)$
instead of $L^p_0({\mathbf R}^n)$.
For $1 \leq p < \infty$, 
$p^\prime$ denotes
the conjugate exponent of $p$,
i.e., $1 / p + 1 / p^\prime =1$.
We write $C_c^\infty({\mathbf R}^n)$ for the space 
of complex-valued infinitely differentiable functions on ${\mathbf R}^n$ with compact support.
${\mathcal S}({\mathbf R}^n)$ denotes the Schwartz space of
complex-valued rapidly decreasing infinitely differentiable functions on ${\mathbf R}^n$
and ${\mathcal S}^\prime({\mathbf R}^n)$ denotes the space of tempered distributions.
The Fourier transform of $f \in L^1({\mathbf R}^n) $ is 
$ {\mathcal F} f (\xi) = \widehat{f} (\xi)  := \int_{{\mathbf R}^n}  f(x) e^{-ix \xi} dx.$
Similarly, the inverse Fourier transform of $h \in L^1({\mathbf R}^n)$ is ${\mathcal F}^{-1} h(x) :=
(2 \pi)^{-n}\widehat{h} (-x)$.
Recall that
$(f*g)^\wedge = \widehat{f}  \widehat{g}$ and
$(fg)^\wedge = (2 \pi)^{-n} ( \widehat{f} * \widehat{g}  )$,
where
$(f*g) (x) = \int_{{\mathbf R}^n} f(x-y) g(y) dy $.
For two Banach spaces $B_1$ and $B_2$, 
$B_1 \hookrightarrow B_2$ means that 
$B_1$ is continuously embedded into $B_2$.

\subsection{Fourier-Wermer algebra}
\label{subsection Fourier-Wermer algebra}
For $1\leq q<\infty$ and  $s \in {\mathbf R}$ write 
 ${\mathcal F}L_s^q({\mathbf R}^n) = {\mathcal F}L_s^q$  
for the space of all tempered distributions with 
the following  norm is finite: 
$$
\|  f  \|_{{\mathcal F}L_s^q} :=
\Big(  \int_{{\mathbf R}^n}
\Big(\langle \xi \rangle^s  | \widehat{f} (\xi)|   \Big)^q   d \xi \Big)^{\frac{1}{q}}.
$$ 
It is well known that
$({\mathcal F}L^q_s({\mathbf R}^n), \| \cdot \|_{{\mathcal F}L^q_s} )$
is a Banach space and ${\mathcal S}({\mathbf R}^n)$
is dense in ${\mathcal F}L^q_s({\mathbf R}^n)$.
If $q=1$ and $s \geq 0$, or $1<q< \infty$ and $s> n / q^\prime$,
then ${\mathcal F}L^q_s ({\mathbf R}^n)$ is a multiplication algebra, i.e., 
\begin{align}
\label{multiplication constant for FLqs}
\| fg  \|_{{\mathcal F}L^q_s} \leq c \| f \|_{{\mathcal F}L^q_s}  
\| g \|_{{\mathcal F}L^q_s},  \qquad f,g  \in {\mathcal F}L^q_s ({\mathbf R}^n)
\end{align}
for some $c  \geq 1$. We call  it
the {\it Fourier-Wermer algebra} owing to the fact that 
it is the  Fourier image of the convolution algebra
$L^q_s ({\mathbf R}^n)$
that was studied in the early  paper of Wermer \cite{Wermer}. 
Moreover,  if $q=1$ and $s \geq 0$, or $1<q< \infty$ and $s> n / q^\prime$, then
$f \in {\mathcal F}L^q_s ({\mathbf R}^n)$
implies  $\widehat{f} \in L^1({\mathbf R}^n)$
by the H\"older inequality.
Thus the Riemann-Lebesgue lemma shows
$f \in C({\mathbf R}^n)$ and vanishes at infinity, 
and  the inversion formula applies, giving  
$f(x) = {\mathcal F}^{-1}( {\widehat{f}} ) (x) $
for all $x \in {\mathbf R}^n$.

One can prove that  ${\mathcal F}L^q_s({\mathbf R}^n)$  possess
approximate units.
\begin{lemma}
\label{approximate unit for FLqs}
Given $1<q< \infty$ and  $s> \frac{n}{q^\prime}$, or
$q=1$ and $s \geq 0$.
Then for  $f \in {\mathcal F}L^q_s({\mathbf R}^n)$
and $\varepsilon >0$ there exists $\phi  \in C^\infty_c ({\mathbf R}^n)$
such that 
$
\| \phi f - f \|_{{\mathcal F}L^q_s} < \varepsilon
$. 
\end{lemma}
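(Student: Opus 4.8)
The plan is to transfer the statement to the frequency side and realize the desired $\phi$ through a dilated approximate identity. Writing $G := \langle\cdot\rangle^s\widehat{f}\in L^q({\mathbf R}^n)$ and recalling $(\phi f)^\wedge = (2\pi)^{-n}\widehat{\phi}*\widehat{f}$, we have
$$
\|\phi f - f\|_{{\mathcal F}L^q_s}
= \big\| \langle\cdot\rangle^s\big( (2\pi)^{-n}\widehat{\phi}*\widehat{f} - \widehat{f}\big)\big\|_{L^q}.
$$
Fix once and for all a function $\phi_0\in C^\infty_c({\mathbf R}^n)$ with $\phi_0(0)=1$, and set $\phi_R(x) := \phi_0(x/R)$ for $R\geq 1$; each $\phi_R$ lies in $C^\infty_c({\mathbf R}^n)$. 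Then $k_R := (2\pi)^{-n}\widehat{\phi_R} = (2\pi)^{-n}R^n\widehat{\phi_0}(R\cdot)$ satisfies $\int_{{\mathbf R}^n} k_R = \phi_0(0) = 1$, has $L^1$-norm bounded independently of $R$ (since $\widehat{\phi_0}\in{\mathcal S}$), and concentrates at the origin as $R\to\infty$. It therefore suffices to prove that $\|\langle\cdot\rangle^s(k_R*\widehat{f}-\widehat{f})\|_{L^q}\to 0$, and the candidate will be $\phi:=\phi_R$ for $R$ large.

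The key step is the decomposition, obtained by inserting $\int k_R = 1$ and the factorization $\langle\xi\rangle^s\widehat{f}(\eta) = \langle\xi\rangle^s\langle\eta\rangle^{-s}G(\eta)$,
$$
\langle\xi\rangle^s\big(k_R*\widehat{f}-\widehat{f}\big)(\xi)
= \big(k_R*G - G\big)(\xi)
+ \int_{{\mathbf R}^n} k_R(\xi-\eta)\big(\langle\xi\rangle^s\langle\eta\rangle^{-s}-1\big)G(\eta)\,d\eta.
$$
The first term tends to $0$ in $L^q$ by the classical theory of approximate identities, since $G\in L^q({\mathbf R}^n)$ and $1\leq q<\infty$. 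The whole difficulty is thus concentrated in the second, ``weight--commutator'' term, and this is the step I expect to be the main obstacle.

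To control it I would exploit that the weight ratio is close to $1$ near the diagonal: a mean value estimate gives $|\langle\xi\rangle^s\langle\xi-\zeta\rangle^{-s}-1|\leq C_s|\zeta|$ uniformly in $\xi$ for $|\zeta|\leq 1$, while the Peetre inequality yields the crude bound $|\langle\xi\rangle^s\langle\xi-\zeta\rangle^{-s}-1|\leq C_s\langle\zeta\rangle^{s}$ for all $\zeta$ (recall $s\geq 0$ in both hypotheses). Splitting the $\eta$-integral at $|\xi-\eta|=\delta$ and applying Young's inequality, the near-diagonal part is bounded in $L^q$ by $C_s\,\delta\,\|k_R\|_{L^1}\|G\|_{L^q}$, and the far part by $C_s\|G\|_{L^q}\int_{|\zeta|>\delta}\langle\zeta\rangle^{s}|k_R(\zeta)|\,d\zeta$. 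Rescaling $\omega=R\zeta$ turns the last integral into $(2\pi)^{-n}\int_{|\omega|>R\delta}\langle\omega/R\rangle^{s}|\widehat{\phi_0}(\omega)|\,d\omega$, which tends to $0$ as $R\to\infty$ for each fixed $\delta$ because $\widehat{\phi_0}\in{\mathcal S}$. Hence, given $\varepsilon>0$, I would first choose $\delta$ so small that the near-diagonal contribution is $<\varepsilon/2$, and then $R$ so large that the far contribution together with $\|k_R*G-G\|_{L^q}$ is $<\varepsilon/2$. Note finally that for $1<q<\infty$ these dilates are unbounded in ${\mathcal F}L^q_s$ (their norms grow like $R^{n/q^\prime}$), so the conclusion cannot be obtained from uniform boundedness and density and must be extracted from the direct estimate above.
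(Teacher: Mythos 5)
Your proof is correct, and although it builds the approximate unit from the same dilated-bump family as the paper (the paper takes $\psi\in C^\infty_c({\mathbf R}^n)$ with $\psi(0)=1$ and sets $\psi_\lambda(x)=\psi(\lambda x)$, $\lambda\to 0$, which is exactly your $\phi_0(\cdot/R)$ with $\lambda=1/R$), the estimation scheme is genuinely different. The paper writes $(\psi_\lambda f-f)^\wedge(\xi)=(2\pi)^{-n}\int_{{\mathbf R}^n}\widehat{\psi}(\eta)\bigl(\widehat{f}(\xi-\lambda\eta)-\widehat{f}(\xi)\bigr)\,d\eta$, applies the Minkowski integral inequality in $\eta$, dominates the inner weighted norm by $\langle\eta\rangle^s\|f\|_{{\mathcal F}L^q_s}$ via submultiplicativity of $\langle\cdot\rangle^s$, and concludes by dominated convergence combined with continuity of translation in the weighted space $L^q_s$ --- the weight stays attached to the translated $\widehat{f}$ throughout. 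You instead commute the weight onto $G=\langle\cdot\rangle^s\widehat{f}$, reducing matters to the classical unweighted statement $k_R*G\to G$ in $L^q$ plus an explicit weight-commutator term, which you dispatch by the near/far split with the mean-value bound $C_s|\zeta|$ and the Peetre bound $C_s\langle\zeta\rangle^s$ (both valid since $s\geq 0$ under either hypothesis). Your version buys a quantitative, self-contained handling of the weight --- the kernel estimates are uniform in $\xi$, the order of quantifiers ($\delta$ before $R$) is legitimate because $\|k_R\|_{L^1}$ is independent of $R$, and you never invoke translation continuity in the weighted space (though the unweighted approximate-identity theorem you cite is itself usually proved by Minkowski plus translation continuity in $L^q$, so the two arguments share a common root). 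The paper's version buys brevity: no splitting, with the single integrable dominant $\langle\eta\rangle^s|\widehat{\psi}(\eta)|$ doing all the work. Two small points: your mean-value estimate implicitly requires $\delta\leq 1$, so say so when you split; and your closing observation that $\|\phi_R\|_{{\mathcal F}L^q_s}\approx R^{n/q^\prime}$ for $q>1$ is correct and worth keeping, since it explains why no uniform-boundedness-plus-density shortcut is available here.
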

\begin{remark}
The case
$q=1$ and $s \geq 0$ is given in \cite[Proposition 1.6.14]{Reiter-Stegeman 2000}).
\end{remark}

\begin{proof} Let $\psi \in C^\infty_c({\mathbf R}^n)$ be such that $\psi(0)=1$.
For $0< \lambda<1$ we set
$ \psi_\lambda  (x) = \psi(\lambda x)$.
Since
$
(\widehat{ \psi_\lambda } * \widehat{f})(\xi)
= \int_{{\mathbf R}^n}
\widehat{\psi} (\eta) \widehat{f} (\xi - \lambda \eta) d \eta
$
and
$
1= \psi(0) = (2 \pi)^{-n} \int_{{\mathbf R}^n}  \widehat{\psi} (\eta) d \eta $,
we have
\begin{align*}
( \psi_\lambda f -f  )^\wedge (\xi)
=
(2 \pi)^{-n} 
\int_{{\mathbf R}^n} 
\widehat{\psi} (\eta)
\Big(
\widehat{f} (\xi - \lambda \eta) - \widehat{f}(\xi)
\Big) d \eta.
\end{align*}
Applying  the Minkowski inequality for integrals  we obtain that
\begin{align*}
\| \psi_\lambda f - f \|_{{\mathcal F}L^q_s}
&
\lesssim
\int_{{\mathbf R}^n} 
|\widehat{\psi} (\eta) | 
\Big\| \langle \cdot \rangle^{s} | \widehat{f} (\cdot - \lambda \eta) - \widehat{f}(\cdot)  | \Big\|_{L^q} d \eta.
\end{align*}
We note $0 < \lambda <1$.
By the submultiplicity of 
$\langle \cdot \rangle^s$ for $s \geq 0$ one has
\begin{align*}
\Big\| \langle \cdot \rangle^{s} \big| \widehat{f} (\cdot - \lambda \eta) - 
\widehat{f}(\cdot)  \big| \Big\|_{L^q} 
&
\lesssim 
\langle  \lambda \eta \rangle^s 
\| \langle \cdot - \lambda \eta  \rangle^s |\widehat{f} (\cdot - \lambda \eta)| \|_{L^q}
+
\| \langle \cdot \rangle^s |\widehat{f} (\cdot)| \|_{L^q} \\
&
\lesssim \langle \eta \rangle^s 
\| f \|_{{\mathcal F}L^q_s}
\end{align*}
Thus we can easily see $\lim_{\lambda \to 0} \| \psi_\lambda f -f \|_{{\mathcal F}L^q_s} =0$.
Hence, for any $\varepsilon>0$ there exists $\phi := \psi_{\lambda_0}$
for some $0< \lambda_0 <1$ such that
$\| \phi f - f \|_{{\mathcal F}L^q_s} < \varepsilon$.

\end{proof}

Furthermore, we have the following result. 
\begin{lemma}
\label{approximate unit for FLqs withf(x_0)=0}
Given $1<q< \infty$ and  $s> n / q^\prime$, or
$q=1$ and $s \geq 0$.
Suppose that 
$f \in {\mathcal F}L^q_s({\mathbf R}^n)$ and  $f(x_0) =0$
for some $x_0 \in {\mathbf R}^n$.
Then for $\varepsilon >0$
there exists $g \in C^\infty_c({\mathbf R}^n)$
such that
$\| f - g \|_{{\mathcal F}L^q_s} < \varepsilon$
and $g(x_0)=0$.
\end{lemma}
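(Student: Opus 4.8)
The plan is to reduce the statement to unconstrained smooth approximation and then to repair the value at $x_0$ by subtracting a fixed bump function, scaled by the (necessarily small) value of the approximant at $x_0$. As a preliminary I would record that $C_c^\infty({\mathbf R}^n)$ is dense in ${\mathcal F}L^q_s({\mathbf R}^n)$ under the stated hypotheses. This combines the density of ${\mathcal S}({\mathbf R}^n)$ (noted in the text) with Lemma \ref{approximate unit for FLqs}: given $\delta>0$, first pick $\psi\in{\mathcal S}$ with $\|f-\psi\|_{{\mathcal F}L^q_s}<\delta/2$, then apply the approximate unit to $\psi$ to get $\phi\in C_c^\infty$ with $\|\phi\psi-\psi\|_{{\mathcal F}L^q_s}<\delta/2$. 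Since $\phi\psi\in C_c^\infty$, this yields a smooth compactly supported function within $\delta$ of $f$.

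Second, I would quantify the pointwise control coming from the norm. Under the hypotheses one has $\langle\cdot\rangle^{-s}\in L^{q'}({\mathbf R}^n)$ (this is exactly where $s>n/q'$, respectively $s\geq0$ for $q=1$, enters), so by H\"older's inequality $\|u\|_\infty \le (2\pi)^{-n}\|\widehat u\|_{L^1}\le C\|u\|_{{\mathcal F}L^q_s}$ for all $u\in{\mathcal F}L^q_s$, that is, ${\mathcal F}L^q_s\hookrightarrow C_b$. In particular, smallness in the ${\mathcal F}L^q_s$-norm forces smallness of pointwise values, and in particular at $x_0$.

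The correction step is then as follows. Fix once and for all a function $\chi\in C_c^\infty({\mathbf R}^n)$ with $\chi(x_0)=1$, and set $M:=\|\chi\|_{{\mathcal F}L^q_s}$, which is finite since $\chi\in{\mathcal S}$. Using the two preliminary steps, choose $h\in C_c^\infty$ with $\|f-h\|_{{\mathcal F}L^q_s}$ so small that both $\|f-h\|_{{\mathcal F}L^q_s}<\varepsilon/2$ and $|h(x_0)|=|h(x_0)-f(x_0)|\le C\|f-h\|_{{\mathcal F}L^q_s}<\varepsilon/(2M)$, the latter using $f(x_0)=0$ together with the embedding of the second step. Then $g:=h-h(x_0)\chi$ lies in $C_c^\infty$, satisfies $g(x_0)=h(x_0)-h(x_0)=0$, and $\|f-g\|_{{\mathcal F}L^q_s}\le\|f-h\|_{{\mathcal F}L^q_s}+|h(x_0)|\,M<\varepsilon$, which is the assertion.

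I expect no single step to be a genuine obstacle; the only point requiring care is that the correction term $h(x_0)\chi$ must not spoil the approximation already achieved, and this is guaranteed precisely because the embedding ${\mathcal F}L^q_s\hookrightarrow C_b$ makes $|h(x_0)|$ small in tandem with the norm. Everything else is a routine assembly of the already-established approximate-unit lemma and the density of the Schwartz space.
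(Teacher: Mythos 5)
Your proposal is correct and follows essentially the same route as the paper: both arguments first produce a $C^\infty_c({\mathbf R}^n)$ approximant of $f$ (combining Lemma~\ref{approximate unit for FLqs} with the density of ${\mathcal S}({\mathbf R}^n)$) and then annihilate the value at $x_0$ by subtracting a small multiple of a fixed bump, the smallness of that value being supplied by the embedding $\| u \|_{L^\infty} \lesssim \| u \|_{{\mathcal F}L^q_s}$ which the paper also uses. The only cosmetic difference is that your additive correction $g = h - h(x_0)\chi$ closes the estimate with a bare triangle inequality, whereas the paper builds its approximant as $(g_0 - g_0(x_0))\psi$ and therefore additionally invokes the algebra inequality \eqref{multiplication constant for FLqs}; this changes nothing of substance.
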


\begin{proof}
Let $\varepsilon >0$. 
By Lemma \ref{approximate unit for FLqs}
there exists $\phi  \in C^\infty_c ({\mathbf R}^n)$
such that 
$
\| \phi f - f \|_{{\mathcal F}L^q_s} < \varepsilon
$
and
$(\phi f)(x_0)  =  \phi(x_0) f(x_0) =0$.
Take  $\psi  \in C^\infty_c({\mathbf R}^n)$ with
$\psi(x)=1$ on ${\rm supp~}\phi$.
Since $\phi f \in {\mathcal F}L^q_s({\mathbf R}^n)$
and  ${\mathcal S}({\mathbf R}^n)$ is dense 
in ${\mathcal F}L^q_s({\mathbf R}^n)$, there exists $g_0 \in {\mathcal S}({\mathbf R}^n)$
such that 
$
\| \phi f - g_0 \|_{{\mathcal F}L^q_s} 
<  \varepsilon  /  3( \| \psi \|_{{\mathcal F}L^q_s}  +1 ) .
$ 
Then
$
g(x) :=  ( g_0(x) -g_0(x_0)  )\psi(x)
\in C^\infty_c({\mathbf R}^n)
$
and
$$
| g(x_0)  |
\leq
\| \phi f - g_0 \|_{L^\infty}
\lesssim\|  ( \phi f -g_0     )^\wedge   \|_{L^1}
\lesssim
\| \phi f - g_0 \|_{{\mathcal F}L^q_s},
$$
and thus $g(x_0) =0$.
By $\phi= \phi \psi$, $(\phi f) (x_0)=0$
and \eqref{multiplication constant for FLqs}
one has
\begin{align*}
&\| f - g \|_{{\mathcal F}L^q_s} 
= \| f  - \phi f  
+ \phi f \psi 
 - (\phi f) (x_0) \psi
- (g_0   - g_0(x_0)) \psi
\|_{{\mathcal F}L^q_s} \\
&
\leq \| f - \phi f \|_{{\mathcal F}L^q_s} 
+ \| \phi f -g_0 \|_{{\mathcal F}L^q_s} \| \psi \|_{{\mathcal F}L^q_s} 
+ | g(x_0) - (\phi f) (x_0)   | \| \psi \|_{{\mathcal F}L^q_s} \\
& 
\lesssim \| f - \phi f \|_{{\mathcal F}L^q_s} 
+ 2 \| \phi f -g_0 \|_{{\mathcal F}L^q_s}  \| \psi \|_{{\mathcal F}L^q_s}  
< \varepsilon.
\end{align*} 
\end{proof}

\begin{corollary} 
Given $1<q< \infty$ and  $s> n / q^\prime$, or
$q=1$ and $s \geq 0$, $f \in {\mathcal F}L^q_s({\mathbf R}^n)$ and $x_0 \in {\mathbf R}^n$.
Then for  $\varepsilon >0$
there exists $g \in C^\infty_c({\mathbf R}^n)$
such that
$g(x_0)=f(x_0)$ and 
$\| f - g \|_{{\mathcal F}L^q_s} < \varepsilon$. 
\end{corollary}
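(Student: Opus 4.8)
The plan is to reduce the statement to Lemma \ref{approximate unit for FLqs withf(x_0)=0}, which already handles the special case $f(x_0)=0$, by subtracting off a fixed smooth compactly supported function that carries the value $f(x_0)$. The only extra ingredient needed is an auxiliary bump function, so I expect the argument to be a short and clean deduction rather than a substantial computation.

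First I would fix once and for all a function $\chi \in C^\infty_c({\mathbf R}^n)$ with $\chi(x_0)=1$; such $\chi$ exists trivially. Since $C^\infty_c({\mathbf R}^n) \subset {\mathcal S}({\mathbf R}^n) \subset {\mathcal F}L^q_s({\mathbf R}^n)$ (using that ${\mathcal S}({\mathbf R}^n)$ is dense in, and in particular contained in, ${\mathcal F}L^q_s({\mathbf R}^n)$ in the stated range of $q,s$), we have $\chi \in {\mathcal F}L^q_s({\mathbf R}^n)$. Now set
$$
h := f - f(x_0)\,\chi \in {\mathcal F}L^q_s({\mathbf R}^n).
$$
Then $h(x_0) = f(x_0) - f(x_0)\,\chi(x_0) = f(x_0) - f(x_0) = 0$, so $h$ satisfies the hypothesis of Lemma \ref{approximate unit for FLqs withf(x_0)=0}.

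Next, given $\varepsilon>0$, I would apply Lemma \ref{approximate unit for FLqs withf(x_0)=0} to $h$ to obtain $g_1 \in C^\infty_c({\mathbf R}^n)$ with $g_1(x_0)=0$ and $\| h - g_1 \|_{{\mathcal F}L^q_s} < \varepsilon$. Finally I would put
$$
g := g_1 + f(x_0)\,\chi \in C^\infty_c({\mathbf R}^n).
$$
Then $g(x_0) = g_1(x_0) + f(x_0)\,\chi(x_0) = f(x_0)$, while
$$
\| f - g \|_{{\mathcal F}L^q_s} = \| f - g_1 - f(x_0)\,\chi \|_{{\mathcal F}L^q_s} = \| h - g_1 \|_{{\mathcal F}L^q_s} < \varepsilon,
$$
which is exactly the claim.

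There is no genuine obstacle here; the proof is a one-line reduction once the correcting function $\chi$ is in place. The only point meriting a moment of care is verifying that $\chi$, and hence $h$ and $g$, actually lie in ${\mathcal F}L^q_s({\mathbf R}^n)$ so that all norms are finite and the subtraction $f - f(x_0)\chi$ stays inside the space; this is immediate from the embedding ${\mathcal S}({\mathbf R}^n) \hookrightarrow {\mathcal F}L^q_s({\mathbf R}^n)$ recorded above. Note that no use of the multiplication-algebra property \eqref{multiplication constant for FLqs} is required, since $f(x_0)$ enters only as a scalar.
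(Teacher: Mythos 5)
Your proof is correct, and it rests on the same key lemma as the paper's proof --- Lemma \ref{approximate unit for FLqs withf(x_0)=0} --- but the reduction to that lemma is organized differently, and in fact more economically. The paper first reduces to $x_0=0$ by translation, then invokes the proof of Lemma \ref{approximate unit for FLqs} to produce $\phi \in C^\infty_c({\mathbf R}^n)$ with $\phi(0)=1$ and $\| \phi f - f \|_{{\mathcal F}L^q_s} < \varepsilon/2$, applies Lemma \ref{approximate unit for FLqs withf(x_0)=0} to $f_0 := (f - f(0))\phi$, and sets $g := g_0 + f(0)\phi$, so the final error must be split as $\| f - g \|_{{\mathcal F}L^q_s} \leq \| f - \phi f \|_{{\mathcal F}L^q_s} + \| f_0 - g_0 \|_{{\mathcal F}L^q_s} < \varepsilon$. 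The reason the paper multiplies by $\phi$ at all is the issue you implicitly sidestep: the constant function $f(0)$ does not belong to ${\mathcal F}L^q_s({\mathbf R}^n)$, so $f - f(0)$ must be localized before the lemma applies. You localize only the constant --- subtracting $f(x_0)\chi$ with $\chi$ an arbitrary bump equal to $1$ at $x_0$ --- rather than localizing the whole function. This buys three simplifications: the correction term cancels exactly ($f - g = h - g_1$), so no $\varepsilon/2$ bookkeeping is needed; the approximate-unit Lemma \ref{approximate unit for FLqs} is not invoked at all; and, as you correctly note, neither is the algebra inequality \eqref{multiplication constant for FLqs}, since the paper's $\phi f$ is a genuine product while your $f(x_0)\chi$ is only a scalar multiple of a fixed Schwartz function. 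The translation reduction to $x_0 = 0$ also becomes unnecessary. The one membership fact your argument needs --- that $\chi \in C^\infty_c({\mathbf R}^n) \subset {\mathcal S}({\mathbf R}^n) \hookrightarrow {\mathcal F}L^q_s({\mathbf R}^n)$ in the stated range of $q$ and $s$ --- is exactly the point you flag, and it holds, so the proof is complete as written.
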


\begin{proof}
It suffices to prove the case $x_0=0$;
the other case is easy to see by considering $F(x) :=f(x+x_0)$
instead.
Let $\varepsilon >0$.
Then the proof of  Lemma \ref{approximate unit for FLqs} implies that
there exists $\phi \in C^\infty_c ({\mathbf R}^n)$ with
$\phi(0) =1$ and $\| \phi f - f \|_{{\mathcal F}L^q_s} < 
\varepsilon / 2$.
Set $f_0(x) := (f(x)-f(0)) \phi (x) \in {\mathcal F}L^q_s({\mathbf R}^n)$. 
Then $f_0(0)=0$.
By Lemma \ref{approximate unit for FLqs withf(x_0)=0}
there exists $g_0 \in C^\infty_c ({\mathbf R}^n)$
such that $g_0(0)=0$ and 
$\| f_0 - g_0 \|_{{\mathcal F}L^q_s} < \varepsilon / 2$.
Now set $g(x) :=g_0(x) + f(0) \phi (x)$.
Then $g(0)=f(0)$ and 
$\| f - g \|_{{\mathcal F}L^q_s} < \varepsilon$. 
\end{proof}

\subsection{Modulation spaces}
\label{def modulation spaces}
Let $1 \leq  p,q \leq \infty$, $s \in {\mathbf R}$
and  $\varphi \in {\mathcal S} ({\mathbf R}^n)$
be such that 
\begin{align}
\label{BUPU}
{\rm supp~}
\varphi \subset [-1,1]^n
\quad
and
\quad
\sum_{k \in {\mathbf Z}^n} 
\varphi (\xi -k) =1 
\quad
(\xi \in {\mathbf R}^n).
\end{align}
Then 
$M_s^{p,q}({\mathbf R}^n) = M_s^{p,q}$ 
consists of all $ f \in {\mathcal S}^\prime({\mathbf R}^n)$ such that the norm
$$
\| f \|_{M^{p,q}_s}
=
\Big(
\sum_{k \in {\mathbf Z}^n}
\langle k \rangle^{sq}
\Big(
\int_{{\mathbf R}^n} 
| \varphi(D-k) f (x) |^p dx
\Big)^{ \frac{q}{p} }
\Big)^{\frac{1}{q}}
$$
is finite, with obvious modifications if
$p$ or $q= \infty$.
Here 
$\varphi(D-k)f(x) = {\mathcal F}^{-1}(\varphi (\cdot -k) \widehat{f}  (\cdot)) (x) $.
It is well known  that  $M^{p,q}_s({\mathbf R}^n)$
is a multiplication algebra
if  $s > n/q^\prime$, or $q=1$ and $s \geq 0$.
We also recall a few basic properties of the
function in $M^{p,q}_s({\mathbf R}^n)$. Next we state some
auxiliary lemmata. 
\begin{lemma}[\cite{Feichtinger-Kobayashi-Sato}, \cite{Guo Fan Dashan Wu Zhao}]
\label{product estimate for Mpqs}
Given $1 \leq p< \infty$,
$q=1$ and $s \geq 0$, or $1<q< \infty$ and $s> n / q^\prime$.
Then 
$
\| fg \|_{M^{p,q}_s} \lesssim \| f \|_{M^{\infty,q}_s} \| g \|_{M^{p,q}_s}
$
for $f,g \in {\mathcal S}({\mathbf R}^n)$.
\end{lemma}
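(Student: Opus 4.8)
The plan is to reduce the bilinear estimate to a weighted convolution inequality on $\ell^q({\mathbf Z}^n)$, the passage from functions to sequences being governed by the almost-orthogonality of the frequency blocks. Throughout I use that $f,g\in{\mathcal S}({\mathbf R}^n)$, so all the sums below converge absolutely.

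First I would decompose both factors along the given partition of unity, writing $f=\sum_{k_1}f_{k_1}$ and $g=\sum_{k_2}g_{k_2}$ with $f_{k_1}=\varphi(D-k_1)f$ and $g_{k_2}=\varphi(D-k_2)g$. Since ${\rm supp}\,\varphi\subset[-1,1]^n$, the Fourier support of $f_{k_1}g_{k_2}$ lies in $k_1+k_2+[-2,2]^n$, so $\varphi(D-k)(f_{k_1}g_{k_2})$ vanishes unless $|k_1+k_2-k|_\infty\le 3$. Hence $\varphi(D-k)(fg)=\sum_{|k_1+k_2-k|_\infty\le 3}\varphi(D-k)(f_{k_1}g_{k_2})$ is a genuinely banded sum. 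Next, since $\varphi(D-k)$ acts as convolution with a modulate of ${\mathcal F}^{-1}\varphi\in{\mathcal S}\subset L^1$, Young's inequality gives $\|\varphi(D-k)h\|_{L^p}\lesssim\|h\|_{L^p}$ uniformly in $k$, while Hölder's inequality gives $\|f_{k_1}g_{k_2}\|_{L^p}\le\|f_{k_1}\|_{L^\infty}\|g_{k_2}\|_{L^p}$. Writing $a_{k_1}=\|f_{k_1}\|_{L^\infty}$, $b_{k_2}=\|g_{k_2}\|_{L^p}$ and $d_k=\|\varphi(D-k)(fg)\|_{L^p}$, I obtain $d_k\lesssim\sum_{|k_1+k_2-k|_\infty\le 3}a_{k_1}b_{k_2}$, which after separating the finitely many diagonal shifts is dominated by a fixed number of discrete convolutions $a*b^{(m)}$ of $(a_k)$ with translates of $(b_k)$. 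Thus $\|fg\|_{M^{p,q}_s}=\|d\|_{\ell^q_s}$ is controlled once I establish the weighted convolution inequality $\|a*b\|_{\ell^q_s}\lesssim\|a\|_{\ell^q_s}\|b\|_{\ell^q_s}$, where $\|c\|_{\ell^q_s}=(\sum_k\langle k\rangle^{sq}|c_k|^q)^{1/q}$; indeed $\|f\|_{M^{\infty,q}_s}=\|a\|_{\ell^q_s}$ and $\|g\|_{M^{p,q}_s}=\|b\|_{\ell^q_s}$ by definition.

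The main obstacle — and the only place the hypothesis $s>n/q^\prime$ is genuinely used — is this last convolution inequality, because $\ell^q*\ell^q\not\subset\ell^q$ when $q>1$, so one cannot simply distribute the full weight by submultiplicativity. Instead I would use $\langle k\rangle^s\lesssim\langle k-j\rangle^s+\langle j\rangle^s$ (valid for $s\ge 0$) to bound $\langle k\rangle^s|(a*b)_k|$ by $(A*|b|)_k+(|a|*B)_k$, where $A_k=\langle k\rangle^s|a_k|$ and $B_k=\langle k\rangle^s|b_k|$, and then invoke the embedding $\ell^q_s({\mathbf Z}^n)\hookrightarrow\ell^1({\mathbf Z}^n)$, which holds precisely because $\langle\cdot\rangle^{-s}\in\ell^{q^\prime}({\mathbf Z}^n)\iff s>n/q^\prime$. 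Young's inequality $\ell^q*\ell^1\to\ell^q$ then yields $\|A*|b|\|_{\ell^q}\le\|A\|_{\ell^q}\|b\|_{\ell^1}\lesssim\|a\|_{\ell^q_s}\|b\|_{\ell^q_s}$, and symmetrically for the other term, which completes the estimate.

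For the remaining case $q=1$, $s\ge 0$, the embedding $\ell^1_s\hookrightarrow\ell^1$ is trivial and the same computation gives $\ell^1_s*\ell^1_s\to\ell^1_s$ directly from submultiplicativity, so no integrability threshold on $s$ is needed. I expect the only delicate bookkeeping to be the uniform-in-$k$ control of $\varphi(D-k)$ on $L^p$ and the reduction of the banded double sum to finitely many convolutions; both are routine once the support restriction $|k_1+k_2-k|_\infty\le 3$ is in hand.
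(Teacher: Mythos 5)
Your proof is correct: the banded frequency decomposition (support of $\widehat{f_{k_1}g_{k_2}}$ in $k_1+k_2+[-2,2]^n$), the uniform $L^p$-boundedness of $\varphi(D-k)$ via Young's inequality, the H\"older step, and the reduction to the weighted convolution inequality $\ell^q_s * \ell^q_s \subset \ell^q_s$ --- proved through $\langle k\rangle^s \lesssim \langle k-j\rangle^s + \langle j\rangle^s$ together with the embedding $\ell^q_s \hookrightarrow \ell^1$, which holds precisely when $s > n/q^\prime$ --- are all sound, as is the trivial $q=1$, $s \geq 0$ case by submultiplicativity. The paper gives no proof of this lemma, citing \cite{Feichtinger-Kobayashi-Sato} and \cite{Guo Fan Dashan Wu Zhao} instead, but the argument in those sources rests on exactly this reduction to weighted convolution inequalities on $\ell^q_s$, so your route is essentially the standard one.
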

%
\begin{lemma}
\label{approximate unit for Mpqs}
Let $1 \leq p < \infty$. Suppose that $q=1$ and $s \geq 0$, or 
$1<q<\infty$ and $s> n / q^\prime$.
If  $f \in M^{p,q}_s({\mathbf R}^n)$,
then for any $\varepsilon >0$ there exists $\phi \in C^\infty_c({\mathbf R}^n)$
such that 
$\| \phi f   - f  \|_{M^{p,q}_s} < \varepsilon$.
\end{lemma}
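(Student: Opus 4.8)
The plan is to mimic the proof of Lemma \ref{approximate unit for FLqs}, replacing the $\mathcal{F}L^q_s$-norm by the $M^{p,q}_s$-norm and using the product estimate of Lemma \ref{product estimate for Mpqs} in place of \eqref{multiplication constant for FLqs}. Take $\psi \in C^\infty_c({\mathbf R}^n)$ with $\psi(0)=1$ and set $\psi_\lambda(x) = \psi(\lambda x)$ for $0<\lambda<1$. The goal is to show $\|\psi_\lambda f - f\|_{M^{p,q}_s} \to 0$ as $\lambda \to 0$; then $\phi := \psi_{\lambda_0}$ works for a suitable $\lambda_0$. Since $\psi_\lambda \in C^\infty_c({\mathbf R}^n) \subset M^{\infty,q}_s({\mathbf R}^n)$, Lemma \ref{product estimate for Mpqs} already gives $\|\psi_\lambda f\|_{M^{p,q}_s} \lesssim \|\psi_\lambda\|_{M^{\infty,q}_s}\|f\|_{M^{p,q}_s}$, so products stay in the space; the content is in the \emph{convergence}.

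First I would handle the case $f \in {\mathcal S}({\mathbf R}^n)$, which is dense in $M^{p,q}_s({\mathbf R}^n)$. On the Fourier side, $\psi_\lambda f - f$ has transform $(2\pi)^{-n}\int_{{\mathbf R}^n}\widehat\psi(\eta)\big(\widehat f(\cdot - \lambda\eta) - \widehat f(\cdot)\big)\,d\eta$, exactly as in Lemma \ref{approximate unit for FLqs}. I would then estimate $\|\psi_\lambda f - f\|_{M^{p,q}_s}$ by pulling the Minkowski inequality through the $\eta$-integral and bounding $\|\,\tau_{\lambda\eta}f - f\,\|_{M^{p,q}_s}$, where $\tau_{\lambda\eta}$ denotes the modulation/translation on the frequency side corresponding to $\widehat f(\cdot - \lambda\eta)$. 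The key facts are that translation in frequency is strongly continuous on $M^{p,q}_s$ at $\lambda\eta = 0$ for fixed $f$, and that the operator norm of this translation grows at most polynomially, namely $\|\tau_{\lambda\eta}f\|_{M^{p,q}_s} \lesssim \langle\eta\rangle^s\|f\|_{M^{p,q}_s}$, which follows from the submultiplicativity of $\langle\cdot\rangle^s$ together with the near-invariance of the $M^{p,q}_s$-norm under translation of the frequency variable (the shift only displaces the decomposition indices $k$). This polynomial bound, times the Schwartz decay of $\widehat\psi$, furnishes an integrable dominating function $\langle\eta\rangle^s|\widehat\psi(\eta)|$, so the dominated convergence theorem gives $\lim_{\lambda\to 0}\|\psi_\lambda f - f\|_{M^{p,q}_s}=0$ for $f \in {\mathcal S}({\mathbf R}^n)$.

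To pass to general $f \in M^{p,q}_s({\mathbf R}^n)$, I would use a standard $\varepsilon/3$ density argument: choose $h \in {\mathcal S}({\mathbf R}^n)$ with $\|f-h\|_{M^{p,q}_s}<\varepsilon/3(\|\psi_\lambda\|_{M^{\infty,q}_s}+1)$ uniformly in $\lambda$ (the norms $\|\psi_\lambda\|_{M^{\infty,q}_s}$ being bounded as $\lambda\to 0$), then split
\begin{align*}
\|\psi_\lambda f - f\|_{M^{p,q}_s}
&\leq \|\psi_\lambda(f-h)\|_{M^{p,q}_s}
+ \|\psi_\lambda h - h\|_{M^{p,q}_s}
+ \|h - f\|_{M^{p,q}_s},
\end{align*}
and control the first term by Lemma \ref{product estimate for Mpqs}, the middle term by the Schwartz case just established, and the last by the choice of $h$.

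The main obstacle will be the step asserting $\|\tau_{\lambda\eta}f\|_{M^{p,q}_s}\lesssim\langle\eta\rangle^s\|f\|_{M^{p,q}_s}$ with strong continuity at the origin, since unlike the $\mathcal{F}L^q_s$ case the $M^{p,q}_s$-norm is defined through a frequency-uniform partition and a non-integer frequency shift $\lambda\eta$ does not respect the lattice $\mathbf Z^n$. I expect this to require comparing $\varphi(D-k)\,\tau_{\lambda\eta}f$ with a finite sum of $\varphi(D-k')f$ over neighboring indices $k'$, using that $\widehat\varphi$ has controlled overlap, and then summing the $\langle k\rangle^{sq}$ weights against $\langle k - k'\rangle^{sq}\lesssim \langle\eta\rangle^{sq}\langle k'\rangle^{sq}$; the uniform-in-$\lambda$ boundedness of $\|\psi_\lambda\|_{M^{\infty,q}_s}$ will need the same lattice-shift bookkeeping. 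Once this translation estimate is in place, the remainder is routine.
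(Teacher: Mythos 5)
Your argument is correct and is in substance the proof the paper gives only by reference: the published proof of Lemma \ref{approximate unit for Mpqs} simply cites Bhimani--Ratnakumar's method (Proposition 3.14 there) and the companion paper, and that method is exactly your scheme --- the dilation $\psi_\lambda=\psi(\lambda\,\cdot)$ with $\psi(0)=1$, the Fourier-side difference formula, Minkowski's integral inequality, dominated convergence with dominating function $\langle\eta\rangle^{s}|\widehat{\psi}(\eta)|$, and a density step. The one point you flag as the main obstacle, namely $\|e^{iu\cdot}f\|_{M^{p,q}_s}\lesssim\langle u\rangle^{s}\|f\|_{M^{p,q}_s}$ together with strong continuity at $u=0$ (and likewise the uniform bound on $\|\psi_\lambda\|_{M^{\infty,q}_s}$), requires no lattice bookkeeping: by the short-time Fourier transform characterization in Remark \ref{equivalent norm Mpqs} one has $|V_\phi(e^{iu\cdot}f)(x,\xi)|=|V_\phi f(x,\xi-u)|$, so the estimate follows at once from $\langle\xi\rangle^{s}\lesssim\langle u\rangle^{s}\langle\xi-u\rangle^{s}$ for $s\geq 0$, while continuity holds for Schwartz $f$ and extends by density since $p,q<\infty$.
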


\begin{proof}
The case $p=q=1$ and $s=0$ was considered in Bhimani-Ratnakumar
\cite[Proposition 3.14]{Bhimani Ratnakumar}
and their method still applies for  the remaining cases,
with only trifling changes (see \cite{Feichtinger-Kobayashi-Sato}
for more details).
\end{proof}
A useful inclusion relation 
(cf. \cite{Lu}, \cite{Okoudjou})
is stated next.  
\begin{lemma}
\label{inclusion Mpqs subset FLqs}
Let $1 \leq p \leq 2$. 
Suppose that $q=1$ and $s \geq 0$,
or $1 < q \leq p^\prime$ and 
$s> n / q^\prime$.
Then we have
$M^{p,q}_s({\mathbf R}^n) \hookrightarrow {\mathcal F}L^q_s({\mathbf R}^n)$.
\end{lemma}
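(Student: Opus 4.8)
The plan is to prove the embedding $M^{p,q}_s(\mathbf{R}^n) \hookrightarrow \mathcal{F}L^q_s(\mathbf{R}^n)$ by comparing the two norms directly on the smooth Littlewood--Paley-type pieces $\varphi(D-k)f$ and then summing in $k$. First I would fix $f \in M^{p,q}_s$ and write $\widehat f = \sum_{k} \varphi(\cdot - k)\widehat f$ using the partition of unity \eqref{BUPU}, so that on each frequency block the relevant quantity is $\langle k\rangle^s \| \varphi(\cdot-k)\widehat f\|_{L^q}$. The target norm is, up to the equivalence $\langle \xi\rangle \approx \langle k\rangle$ for $\xi$ in the support of $\varphi(\cdot-k)$ (which holds because that support lies in $k + [-1,1]^n$), comparable to $\big(\sum_k \langle k\rangle^{sq}\|\varphi(\cdot-k)\widehat f\|_{L^q}^q\big)^{1/q}$. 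Thus the whole point reduces to controlling $\|\varphi(\cdot-k)\widehat f\|_{L^q(\mathbf{R}^n)}$ by the modulation-space building block $\|\varphi(D-k)f\|_{L^p(\mathbf{R}^n)}$, since the weight $\langle k\rangle^s$ and the outer $\ell^q$ summation are identical on both sides.

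The heart of the matter is therefore a single-block Hausdorff--Young estimate. Each $\varphi(D-k)f$ has Fourier transform $\varphi(\cdot-k)\widehat f$ supported in the fixed-size cube $k+[-1,1]^n$. Because $1 \le p \le 2$, the Hausdorff--Young inequality gives $\|\varphi(\cdot-k)\widehat f\|_{L^{p'}} \lesssim \|\varphi(D-k)f\|_{L^p}$ with a constant independent of $k$ by translation invariance. The hypothesis $q \le p'$ (in the range $1<q\le p'$; the case $q=1$ is the already-known endpoint) is exactly what lets me pass from the $L^{p'}$ control to $L^q$ control: since $\varphi(\cdot-k)\widehat f$ lives on a set of bounded measure (at most $2^n$), Hölder's inequality on that cube yields $\|\varphi(\cdot-k)\widehat f\|_{L^q} \lesssim \|\varphi(\cdot-k)\widehat f\|_{L^{p'}}$ with a uniform constant, precisely because $q \le p'$. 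Chaining these two inequalities gives the desired per-block bound $\|\varphi(\cdot-k)\widehat f\|_{L^q} \lesssim \|\varphi(D-k)f\|_{L^p}$ uniformly in $k$.

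Assembling the pieces, I would raise the block estimate to the $q$-th power, multiply by $\langle k\rangle^{sq}$, sum over $k \in \mathbf{Z}^n$, and take the $q$-th root, obtaining
\[
\|f\|_{\mathcal{F}L^q_s} \approx \Big(\sum_{k}\langle k\rangle^{sq}\|\varphi(\cdot-k)\widehat f\|_{L^q}^q\Big)^{1/q} \lesssim \Big(\sum_{k}\langle k\rangle^{sq}\|\varphi(D-k)f\|_{L^p}^q\Big)^{1/q} = \|f\|_{M^{p,q}_s}.
\]
The first comparison deserves a word: rewriting $\|f\|_{\mathcal{F}L^q_s}^q = \int \langle\xi\rangle^{sq}|\widehat f(\xi)|^q\,d\xi$ and inserting the partition of unity, one uses that at most $2^n$ of the $\varphi(\cdot-k)$ overlap at any point, together with $\langle\xi\rangle^s \approx \langle k\rangle^s$ on $\mathrm{supp}\,\varphi(\cdot-k)$, to sandwich the integral between the two $\ell^q$-type sums; this finite-overlap bookkeeping is routine but should be stated.

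The main obstacle is purely the uniformity of constants in $k$. Both the Hausdorff--Young step and the Hölder step must produce constants that do not grow with $|k|$; this is guaranteed by translation invariance and by the uniform bound $|\mathrm{supp}\,\varphi(\cdot-k)| \le 2^n$, but it is the one place where the argument would break if the frequency cubes were not of fixed size. I expect no difficulty beyond carefully invoking $1 \le p \le 2$ for Hausdorff--Young and $q \le p'$ for Hölder; the weight $\langle\cdot\rangle^s$ is handled for free since it matches on both sides and only the comparison $\langle\xi\rangle \approx \langle k\rangle$ on each support is needed. For the endpoint $q=1$, $s\ge 0$, I would note that the same chain works with $p'=\infty$ and the trivial $L^1$-on-a-bounded-set estimate, or simply cite that this case is already contained in the predecessor paper.
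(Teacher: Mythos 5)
Your proposal is correct and takes essentially the same route as the paper: both reduce the lemma to the uniform per-block estimate $\|\varphi(\cdot-k)\widehat f\|_{L^q} \lesssim \|\varphi(D-k)f\|_{L^p}$ via H\"older on a fixed-size cube (using $q \le p^\prime$) followed by Hausdorff--Young (using $1 \le p \le 2$), then sum against $\langle k\rangle^{sq}$ using $\langle \xi \rangle \approx \langle k \rangle$ on each block; the paper merely organizes the bookkeeping slightly differently, covering each cube $k+[-1,1]^n$ by finitely many translates $\varphi(\cdot-(k+\ell))$, $|\ell|\le N$, rather than your finite-overlap insertion of the partition of unity (where, incidentally, the overlap constant is $3^n$ rather than $2^n$, which affects nothing).
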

\begin{proof}
We give the proof only for $1<p \leq 2$;
the case $p=1$ is similar.
Given $f \in M^{p,q}_s({\mathbf R}^n)$
and $\varphi \in C^\infty_c ({\mathbf R}^n)$ 
as in \eqref{BUPU} 
 there exists $N \in {\mathbf N}$  with 
$$
\chi_{k+ [-1,1]^n} (\xi)
=\sum_{|\ell|  \leq N } \varphi(\xi - (k+\ell))  \chi_{k+[-1,1]^n} (\xi)
\quad
(\xi \in {\mathbf R}^n)
$$
for all $k \in {\mathbf Z}^n$.
By the H\"older inequality with
$ 1 /  (p^\prime / q ) + 
1 / ( p^\prime /  (p^\prime -q) ) =1  $
and the Hausdorff-Young inequality we see
\begin{align*}
\| f \|_{{\mathcal F}L^q_s}^q
&\leq 
\sum_{k \in {\mathbf Z}^n} 
\int_{k+[-1,1]^n}
 \langle \xi \rangle^{sq} |\widehat{f}(\xi)|^q   d \xi \\
& \lesssim
 \sum_{k \in {\mathbf Z}^n} 
\langle k \rangle^{sq}
\int_{k+[-1,1]^n}
\Big|
\sum_{|\ell| \leq N} \varphi (\xi -(k+ \ell)) \widehat{f} (\xi)
\Big|^q d \xi \\
&\lesssim
\sum_{|\ell| \leq N} 
\sum_{k \in {\mathbf Z}^n}
\langle k \rangle^{sq}
\int_{k+ [-1,1]^n} 
|\varphi(\xi -(k+\ell)) \widehat{f} (\xi)  |^q d \xi \\
& \lesssim 
\sum_{|\ell| \leq N} 
\sum_{k \in {\mathbf Z}^n} 
\langle k \rangle^{sq}
\| \varphi(\cdot - (k+\ell)   )   \widehat{f} (\cdot) \|^q_{L^{p^\prime}} \\
& \lesssim \sum_{|\ell| \leq N} 
\sum_{k \in {\mathbf Z}^n} 
\langle k \rangle^{sq}
\| \varphi (D- (k+\ell) )f \|^q_{L^p} 
 \lesssim \| f \|_{M^{p,q}_s}^q.
\end{align*} 
\end{proof} 
\begin{lemma}
\label{inclusion relation (FLqs)_c}
Let $1 \leq p < \infty$. Suppose that 
 $q=1$ and $s \geq 0$, or  $1<q< \infty$ and $s> n / q^\prime$.
Set $( {\mathcal F}L^q_s )_c
:= \{ f \in {\mathcal F}L^q_s ({\mathbf R}^n) ~|~ {\rm supp~} f {\rm ~is~ compact} \}.
$
Then we have $({\mathcal F}L^q_s)_c \hookrightarrow M^{p,q}_s ({\mathbf R}^n)$.
\end{lemma}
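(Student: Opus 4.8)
The plan is to avoid estimating the $L^p$-norms of the frequency blocks of $f$ directly, and instead to exploit the pointwise factorization $f = f\psi$ that is available for compactly supported $f$, combined with the multiplication estimate of Lemma~\ref{product estimate for Mpqs}. The idea is to transfer all of the $L^p$-burden onto a fixed smooth cut-off, so that $f$ itself only needs to be controlled in $M^{\infty,q}_s$.

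First I would fix $f \in (\mathcal FL^q_s)_c$ and choose $\psi \in C^\infty_c(\mathbf{R}^n)$ with $\psi \equiv 1$ on $\operatorname{supp} f$, so that $f = f\psi$. Since $C_c^\infty \subset \mathcal S \hookrightarrow M^{p,q}_s$, the quantity $\|\psi\|_{M^{p,q}_s}$ is a finite constant depending only on $\operatorname{supp} f$. Applying Lemma~\ref{product estimate for Mpqs} (extended from $\mathcal S$ to $\mathcal FL^q_s$ by density, using that $\mathcal S$ is dense in $\mathcal FL^q_s$ and that the $M^{p,q}_s$-norm is lower semicontinuous under convergence in $\mathcal S^\prime$) gives $\|f\|_{M^{p,q}_s} = \|f\psi\|_{M^{p,q}_s} \lesssim \|f\|_{M^{\infty,q}_s}\,\|\psi\|_{M^{p,q}_s}$. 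Thus the whole claim reduces to the single inequality $\|f\|_{M^{\infty,q}_s} \lesssim \|f\|_{\mathcal FL^q_s}$. For this I would argue blockwise: since $\varphi(D-k)f = \mathcal F^{-1}(\varphi(\cdot-k)\widehat f)$ has Fourier support in $k+[-1,1]^n$, one has $\|\varphi(D-k)f\|_{L^\infty} \le (2\pi)^{-n}\|\varphi(\cdot-k)\widehat f\|_{L^1}$, and Hölder's inequality on this fixed cube (with exponents $q,q^\prime$) yields $\|\varphi(\cdot-k)\widehat f\|_{L^1} \lesssim \big(\int_{k+[-1,1]^n}|\widehat f(\xi)|^q\,d\xi\big)^{1/q}$. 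Using $\langle k\rangle \approx \langle \xi\rangle$ on $k+[-1,1]^n$ (here $s\ge0$ in all admissible cases) and the bounded overlap of the cubes $\{k+[-1,1]^n\}_{k\in\mathbf{Z}^n}$, summing over $k$ against the weight $\langle k\rangle^{sq}$ gives $\|f\|_{M^{\infty,q}_s}^q \lesssim \sum_k \int_{k+[-1,1]^n}\langle\xi\rangle^{sq}|\widehat f|^q\,d\xi \lesssim \|f\|_{\mathcal FL^q_s}^q$, which closes the argument.

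The main obstacle here is conceptual rather than computational. For $p<\infty$ the block $\varphi(D-k)f$ is band-limited but \emph{not} compactly supported, so finiteness of its $L^p$-norm genuinely requires spatial decay, and an abstract $f\in\mathcal FL^q_s$ supplies only the ``free'' $L^\infty$- and $L^2$-control (via Riemann--Lebesgue/Hölder and Plancherel). The factorization $f=f\psi$ is exactly what dissolves this difficulty: it deposits the entire spatial-decay requirement onto the smooth compactly supported factor $\psi$, whose membership in $M^{p,q}_s$ is automatic, while leaving for $f$ only the $M^{\infty,q}_s$-estimate that needs no decay. The two technical points to watch are the density extension of Lemma~\ref{product estimate for Mpqs} to the non-Schwartz argument $f$, and the fact that the resulting embedding constant depends on $\operatorname{supp} f$; the latter is harmless and is consistent with the inductive-limit meaning of $\hookrightarrow$ for $(\mathcal FL^q_s)_c$.
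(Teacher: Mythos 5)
Your proposal is correct and follows essentially the same route as the paper's proof: the factorization $f = f\psi$ combined with Lemma~\ref{product estimate for Mpqs} to reduce everything to the estimate $\| f \|_{M^{\infty,q}_s} \lesssim \| f \|_{{\mathcal F}L^q_s}$, which is then proved blockwise via the $L^1$-bound on each $\varphi(\cdot-k)\widehat f$ and H\"older with exponents $q,q^\prime$ on the cube $k+[-1,1]^n$. The only cosmetic difference is that the paper realizes the H\"older step through the finite overlap expansion $\varphi(\xi-k)=\sum_{|\ell|\leq N}\varphi(\xi-k)\varphi(\xi-(k+\ell))$ rather than your direct use of the cube indicator, and you are slightly more careful than the paper in flagging the density extension of Lemma~\ref{product estimate for Mpqs} beyond Schwartz functions.
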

\begin{proof}
Let $f \in ({\mathcal F}L^q_s)_c$ and 
$\phi \in {\mathcal S}({\mathbf R}^n)$ be 
such that $\phi(x)=1$ on ${\rm supp~}f$.
By Lemma \ref{product estimate for Mpqs}
we have
$$
\| f \|_{M^{p,q}_s} 
= \| f \phi \|_{M^{p,q}_s}
\lesssim \| f \|_{M^{\infty,q}_s} \| \phi \|_{M^{p,q}_s}.
$$
Moreover, let $\varphi \in {\mathcal S}({\mathbf R}^n)$
as in \eqref{BUPU}.
Then there exists $N \in {\mathbf N}$ such that
$$
\varphi(\xi -k) = \sum_{|\ell | \leq N } \varphi(\xi-k) \varphi(\xi - (k+ \ell)) 
\quad (\xi \in {\mathbf R}^n)
$$
for all $k \in {\mathbf Z}^n$.
Then  the Hausdorff-Young and  the H\"older inequality show that
\begin{align*}
\| f \|_{M^{\infty,q}_s}^q
&\lesssim
\sum_{k \in {\mathbf Z}^n}
\langle k \rangle^{sq} \| \varphi( \cdot -k) \widehat{f} \|_{L^1}^q \\ 
& \leq
\sum_{k \in {\mathbf Z}^n} 
\langle k \rangle^{sq}
\Big(
\sum_{|\ell| \leq N} 
\| \varphi( \cdot - (k+\ell) ) \|_{L^{q^\prime}}
\| \varphi(\cdot -k) 
\widehat{f} \|_{L^q}
\Big)^q
 \\
&
\lesssim
\sum_{k \in {\mathbf Z}^n}
\int_{k+ [-1,1]^n} 
\langle \xi \rangle^{sq}
| \varphi(\xi -k)  \widehat{f} (\xi) |^q d \xi 
\lesssim \| f \|_{{\mathcal F}L^q_s}^q,
\end{align*}
which implies the desired result. 
\end{proof}
\begin{remark}
\label{equivalent norm Mpqs}
There is another characterization of 
$M^{p,q}_s$ using the short-time Fourier
transform, i.e., 
for $\phi \in {\mathcal S} ({\mathbf R}^n) \setminus \{ 0 \} $,
we set: 
$$
V_\phi f (x, \xi)
:=
\langle f(t), \phi(t-x) e^{it \xi}  \rangle
=
\int_{{\mathbf R}^n}
f(t)  \overline{\phi(t-x)} e^{-i t \xi} dt, 
$$
$$
\| f \|_{M^{p,q}_s}
\approx
\Big(
\int_{{\mathbf R}^n}
\langle \xi  \rangle^{sq}
\Big(
\int_{{\mathbf R}^n} | V_\phi f ( x, \xi) |^p dx \Big)^{\frac{q}{p}}
d \xi \Big)^{\frac{1}{q}},
$$ 
i.e. this defines an equivalent norm, and in addition 
\begin{align}
\label{basic STFT}
V_\phi f (x, \xi)
&=
(2 \pi)^{-n}
 e^{- i x \xi}
V_{ \widehat{\phi}  } \widehat{f} (\xi, -x)
=
(2 \pi)^{-n}
e^{-i x \xi}
(f* M_\xi \phi^*) (x),
\end{align}
where $\phi^* (x) = \overline{\phi(-x)}$
(see \cite[Lemma 3.1.1]{Grochenig 2001}).
\end{remark}
\section{Spectral synthesis}
\label{sectioin spectral synthesis}
Throughout this section, $X$ stands for 
$M^{p,1}_s({\mathbf R}^n)$ ($1 \leq p < \infty$, $s \geq 0$),
$M^{p,q}_s({\mathbf R}^n)$
($1 \leq p \leq 2$, $1<q< \infty$, $s > n / q^\prime$),
${\mathcal F}L^1_s ({\mathbf R}^n)$  $(s \geq 0)$
or ${\mathcal F}L^q_s ({\mathbf R}^n)$  
($1<q<\infty$, $s> n / q^\prime$).
Moreover,  the closure of  $X_0 \subset X$
in $X$ will be denoted by $\overline{X_0}^{ \| \cdot \|_{X}}$.
\begin{definition}
Let $I$ be a linear subspace of $X$.
Then $I$ is called an {\it ideal} in $X$
if $fg \in I$
whenever $f \in X$
and $g \in I$. 
Moreover, if an ideal $I$ in $X$ is a closed subset of
$X$, then $I$ is called a
{\it closed ideal} in $X$.
For a subset $S$ of $X$,
the set $\bigcap_{\lambda \in \Lambda} I_\lambda$
is called  the ideal generated by $S$, where 
$\{ I_\lambda \}_{\lambda \in \Lambda}$
denoted the set of all ideals in $X$ containing $S$.
\end{definition}
\begin{definition}
	Let  $I$ be a closed ideal
	in   $X$.
	Then the {\it zero-set} of $I$
	is defined by $\displaystyle Z(I)
:=  \bigcap_{f \in I} f^{-1}(\{ 0 \}) $
with $f^{-1}(\{ 0 \}) := \{ x \in {\mathbf R}^2 ~|~ f(x) = 0\}  $.	
\end{definition}
We note that 
$Z(I)$ is a closed subset of $X$
if $I$ is a closed ideal in $X$.
In fact, if $f \in X$, then 
$f$ is continuous on ${\mathbf R}^n$ and thus
$f^{-1} (\{ 0 \})$ is a closed subset of ${\mathbf R}^n$.
We will write $f|_E =0$ if $f(x) =0$ for all $x \in E$.
\begin{lemma}
	\label{Basic lemma 1 closed ideal}
	Let $E$ be a closed subset of ${\mathbf R}^n$.
	Then 
	$
	I(E) := 	\{ f \in X ~|~ f |_E = 0 \} 	$
	is a closed ideal in $X$ with 	$E = Z(I(E)) $.
\end{lemma}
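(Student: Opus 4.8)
The plan is to verify the three assertions in turn: that $I(E)$ is an ideal, that it is closed, and that $E = Z(I(E))$. The ideal property is immediate: if $f \in X$ and $g \in I(E)$, then for every $x \in E$ we have $(fg)(x) = f(x)g(x) = f(x)\cdot 0 = 0$, so $fg|_E = 0$ and hence $fg \in I(E)$. This uses only that $X$ is a multiplication algebra, which holds in all four cases defining $X$, and that pointwise evaluation makes sense because every $f \in X$ is continuous (as recorded in the preliminaries, $f \in {\mathcal F}L^q_s$ lies in $C({\mathbf R}^n)$, and the modulation-space case embeds into ${\mathcal F}L^q_s$ via Lemma \ref{inclusion Mpqs subset FLqs}).

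Next I would show $I(E)$ is closed. The key is that convergence in $X$ implies pointwise convergence: since $X \hookrightarrow {\mathcal F}L^q_s \hookrightarrow C_b({\mathbf R}^n)$ with the sup-norm controlled by $\|\cdot\|_X$ (via $\|f\|_{L^\infty} \lesssim \|\widehat f\|_{L^1} \lesssim \|f\|_{{\mathcal F}L^q_s}$, as used already in the proof of Lemma \ref{approximate unit for FLqs withf(x_0)=0}), a sequence $f_j \to f$ in $X$ satisfies $f_j(x) \to f(x)$ for each fixed $x$. Thus if each $f_j \in I(E)$ and $f_j \to f$ in $X$, then for $x \in E$ we get $f(x) = \lim_j f_j(x) = 0$, so $f \in I(E)$. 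Hence $I(E)$ is a closed subspace, and being an ideal, a closed ideal.

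Finally I would establish $E = Z(I(E))$. The inclusion $E \subseteq Z(I(E))$ is formal: every $f \in I(E)$ vanishes on $E$ by definition, so $E \subseteq f^{-1}(\{0\})$ for all such $f$, whence $E \subseteq \bigcap_{f \in I(E)} f^{-1}(\{0\}) = Z(I(E))$. For the reverse inclusion $Z(I(E)) \subseteq E$, I must show that any point $x_0 \notin E$ also lies outside $Z(I(E))$, i.e. produce some $f \in I(E)$ with $f(x_0) \neq 0$. Since $E$ is closed and $x_0 \notin E$, there is an open ball around $x_0$ disjoint from $E$; choosing $f \in C^\infty_c({\mathbf R}^n)$ supported in that ball with $f(x_0) = 1$ gives $f \in X$ (as $C^\infty_c \subset X$) with $f|_E = 0$ and $f(x_0) \neq 0$, so $x_0 \notin Z(I(E))$.

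The main obstacle, and the only point requiring care, is confirming that the embedding $X \hookrightarrow C_b({\mathbf R}^n)$ holds uniformly enough to transfer norm-convergence to pointwise convergence across all four families listed at the start of the section; for the modulation-space cases this relies essentially on Lemma \ref{inclusion Mpqs subset FLqs} together with the Riemann–Lebesgue argument, and one should check the hypotheses there ($1 \le p \le 2$ and $1 < q \le p'$, or $q=1$) match the standing assumptions on $X$. Everything else is a direct verification, and no deep structural result beyond the multiplication-algebra property and the continuous embedding into $C_b$ is needed.
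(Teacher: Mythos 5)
Your overall architecture (ideal property by pointwise multiplication, closedness via a sup-norm bound, and $E = Z(I(E))$ via a compactly supported bump separating $x_0$ from the closed set $E$) is exactly the paper's, and the ideal step and the zero-set step are correct as written. The genuine gap is in the closedness step: you derive the embedding of $X$ into $C_b({\mathbf R}^n)$ for the modulation-space cases from Lemma \ref{inclusion Mpqs subset FLqs}, i.e.\ from $M^{p,q}_s \hookrightarrow {\mathcal F}L^q_s$, and you explicitly defer the check that its hypotheses ($1 \le p \le 2$ and $1 < q \le p^\prime$, or $q=1$) match the standing assumptions on $X$. They do not: Section 3 allows $X = M^{p,1}_s({\mathbf R}^n)$ for all $1 \le p < \infty$ (so $p > 2$ occurs) and $X = M^{p,q}_s({\mathbf R}^n)$ for all $1 < q < \infty$ with $s > n/q^\prime$ (so $q > p^\prime$ occurs, e.g.\ $p=2$, $q=3$). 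In those ranges Lemma \ref{inclusion Mpqs subset FLqs} is unavailable, and the inclusion $M^{p,q}_s \subset {\mathcal F}L^q_s$ genuinely fails for $q > p^\prime$: Hausdorff--Young only puts the blocks $\varphi(\cdot - k)\widehat f$ in $L^{p^\prime}$, and $L^{p^\prime}$ does not control $L^q$ on a fixed cube when $q > p^\prime$. So the check you postpone cannot be completed; your route through ${\mathcal F}L^q_s$ is not merely unverified but unrepairable on part of the lemma's scope.

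The paper avoids this by getting pointwise control entirely inside the modulation-space scale, via the chain $\| f \|_{L^\infty} \lesssim \| f \|_{M^{\infty,1}} \lesssim \| f \|_{M^{p,1}} \lesssim \| f \|_{M^{p,q}_s}$: the last estimate is H\"older in the summation index $k$ (using $s > n/q^\prime$, or $q=1$ and $s \ge 0$), and the middle one is a Bernstein-type bound $\| \varphi(D-k) f \|_{L^\infty} \lesssim \| \varphi(D-k) f \|_{L^p}$, uniform in $k$ because each block is band-limited to a cube of fixed size. This chain holds for every $1 \le p < \infty$, hence for all four choices of $X$. Substituting it for your embedding into ${\mathcal F}L^q_s$ (which remains perfectly correct when $X$ itself is a Fourier--Wermer algebra) closes the gap, and the remainder of your argument then goes through verbatim.
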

\begin{proof}
	We give the proof  only for the case $X= M^{p,q}_s({\mathbf R}^n)$;
	the same applies to other cases.
	It is clear that $I(E)$ is an ideal in $M^{p,q}_s ({\mathbf R}^n)$.
	To see $I(E)$ is closed,
	let $f \in M^{p,q}_s ({\mathbf R}^n)$, $\{ f_n \}^\infty_{n=1} \subset I(E)$
	and
	$ \| f_n - f \|_{M^{p,q}_s} \to 0$ $(n \to \infty)$.
	Since
	$$
	\| f_n -f \|_{L^\infty}
	\lesssim
	\| f_n -f \|_{M^{\infty,1}} 
	\lesssim \| f_n -f \|_{M^{p,1}}
	\lesssim 
	\| f_n - f \|_{M^{p,q}_s},
	$$
	we see that 
	$\{ f_n \}^\infty_{n=1}$  converges pointwise to $f$ on ${\mathbf R}^n$.
	Since $f_n |_E =0$, we have $f|_E=0$, and thus $f \in I(E)$.
	Hence  $I(E)$ is closed.
	Next we prove $E=Z(I(E))$. 
	Since $E \subset Z(I(E))$ is clear, we
	show $Z(I(E)) \subset E$.
	Suppose 
	$x_0  \not\in E$. 
	Since $E$ is closed
	and  $C^\infty_c  ({\mathbf R}^n) \subset M^{p,1}_s ({\mathbf R}^n) $, 
	there exists 
	$f \in M^{p,1}_s ({\mathbf R}^n)$ such that
	$f(x_0 ) =1$ and $f |_E =0$.
	Then $f \in I(E)$ and $f(x_0) \not= 0 $.
	Thus   $x_0 \not\in Z(I(E))$,
	which implies the desired result.  	
\end{proof}
\begin{definition}
Let $E \subset {\mathbf R}^n $ be  closed
and $I(E)$ be the set defined in Lemma 
\ref{Basic lemma 1 closed ideal}.
Define $J (E)$ by the closed ideal in $X$ generated by
$$
J_0 (E) :=\{ f \in X ~|~ f (x)= 0 ~{\rm in ~ a~neighborhood~ of}~E \}.
$$
Then $E$ is called a {\it set of spectral synthesis} for $X$
if  $I(E) = J(E)$. 
\end{definition}
\begin{remark}
$J(E)$ is the smallest closed ideal $I^\prime$
with $Z(I^\prime) =E$  (cf. \cite{Kobayashi Sato 4}).
\end{remark}  

\subsection{Spectral synthesis for ${\mathcal F}L^q_s$}

\begin{proposition}
\label{one point}
$(i)$
Let $1<q \leq 2$ and $n / q^\prime < s  < n / q^\prime +1 $.
Then single points of ${\mathbf R}^n$ are sets of spectral 
synthesis for ${\mathcal F}L^q_s ({\mathbf R}^n)$.\\
$(ii)$ Let $1<q< \infty$ and $s > n / q^\prime  + 1$.
Then single points of ${\mathbf R}^n$ are not 
sets of spectral synthesis for  ${\mathcal F}L^q_s({\mathbf R}^n)$.
\end{proposition}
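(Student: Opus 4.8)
The plan is to reduce everything to a duality computation in the dual of $\mathcal{F}L^q_s(\mathbf{R}^n)$. By translation invariance it suffices to treat the single point $\{0\}$. Throughout I use that $s>n/q'$ forces $\widehat{f}\in L^1$ for every $f\in\mathcal{F}L^q_s$ (by H\"older, since $\langle\cdot\rangle^{-s}\in L^{q'}$), so that point evaluation $\delta_0\colon f\mapsto f(0)=(2\pi)^{-n}\int_{\mathbf{R}^n}\widehat{f}(\xi)\,d\xi$ is a nonzero bounded functional on $\mathcal{F}L^q_s$. Since $1<q<\infty$, the Fourier transform is an isometric isomorphism of $\mathcal{F}L^q_s$ onto the weighted space $L^q_s$, and the standard weighted $L^q$-duality identifies $(\mathcal{F}L^q_s)^\ast$ with $\mathcal{F}L^{q'}_{-s}$; concretely, a bounded functional on $\mathcal{F}L^q_s$ is a tempered distribution $T$ with $\langle\cdot\rangle^{-s}\widehat{T}\in L^{q'}(\mathbf{R}^n)$.

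The heart of the argument is the annihilator reformulation. Since $J(\{0\})\subseteq I(\{0\})$ are both closed subspaces, the bipolar theorem shows that $\{0\}$ is a set of spectral synthesis for $\mathcal{F}L^q_s$ if and only if $J(\{0\})^\perp\subseteq I(\{0\})^\perp$. Now $I(\{0\})$ is the kernel of the continuous functional $\delta_0$, so $I(\{0\})^\perp=\mathbf{C}\,\delta_0$. On the other hand, any $T\in J(\{0\})^\perp$ annihilates every $\phi\in C_c^\infty(\mathbf{R}^n)$ with $0\notin{\rm supp}\,\phi$, because such $\phi$ lie in $J_0(\{0\})$; restricted to $\mathcal{S}(\mathbf{R}^n)$ (which is dense in $\mathcal{F}L^q_s$) the functional $T$ is thus a tempered distribution with ${\rm supp}\,T\subseteq\{0\}$, hence $T=\sum_{|\alpha|\le N}c_\alpha\,\partial^\alpha\delta_0$ for some $N$. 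Synthesis at $\{0\}$ is therefore equivalent to the statement that the only such $T$ lying in $(\mathcal{F}L^q_s)^\ast$ are the multiples of $\delta_0$. Since $\widehat{\partial^\alpha\delta_0}(\xi)=(i\xi)^\alpha$, the membership test reduces to a single integrability computation: $\langle\cdot\rangle^{-s}\,\xi^\alpha\in L^{q'}$ if and only if $s>|\alpha|+n/q'$.

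From here the two parts follow by choosing $N$ appropriately. For $(ii)$, when $s>n/q'+1$ each first-order functional $\partial_j\delta_0$ satisfies $s>1+n/q'$, hence belongs to $(\mathcal{F}L^q_s)^\ast$ and annihilates $J(\{0\})$; choosing $f\in C_c^\infty$ with $f(0)=0$ but $\partial_j f(0)\neq0$ shows $\partial_j\delta_0\notin I(\{0\})^\perp$, so $J(\{0\})^\perp\not\subseteq I(\{0\})^\perp$ and synthesis fails. For $(i)$, suppose $n/q'<s<n/q'+1$ and let $T=\sum_{|\alpha|\le N}c_\alpha\partial^\alpha\delta_0\in(\mathcal{F}L^q_s)^\ast$ have top order $N\ge1$. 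Then $\widehat{T}$ is a polynomial whose leading homogeneous part $P_N$ is nonzero, so $P_N$ does not vanish a.e.\ on $S^{n-1}$ and $\int_{S^{n-1}}|P_N(\omega)|^{q'}\,d\omega>0$. Passing to polar coordinates, $\int_{|\xi|>1}\langle\xi\rangle^{-sq'}|\widehat{T}(\xi)|^{q'}\,d\xi\gtrsim\int_1^\infty r^{(N-s)q'+n-1}\,dr$, which diverges because $s<n/q'+1\le N+n/q'$. This contradicts $T\in(\mathcal{F}L^q_s)^\ast$, forcing $N=0$ and $T\in\mathbf{C}\,\delta_0$; hence $J(\{0\})^\perp\subseteq I(\{0\})^\perp$ and $\{0\}$ is a set of spectral synthesis.

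The step I expect to be the most delicate is the lower bound in the polar-coordinate estimate for $(i)$: one must exclude the possibility that cancellation among the terms of $\widehat{T}$, or the vanishing of $P_N$ in certain directions, renders $\langle\cdot\rangle^{-s}\widehat{T}$ integrable after all. This is precisely why one isolates the leading homogeneous part $P_N$ and invokes that a nonzero polynomial cannot vanish on a subset of positive measure of the sphere, so that the angular integral stays bounded below for large $|\xi|$ and the radial factor $r^{(N-s)q'}$ governs convergence. I note that the duality argument sketched above uses only $1<q<\infty$ together with the stated range of $s$; the restriction $q\le2$ in $(i)$ matches the setting of \cite{Kobayashi Sato 4} from which the point results are read off.
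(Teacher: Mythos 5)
Your proposal is correct, but for part $(i)$ it takes a genuinely different route from the paper. The paper proves $(i)$ constructively: given $f$ with $f(x_0)=0$ it first produces $g\in C^\infty_c$ with $g(x_0)=0$ close to $f$ (Lemma \ref{approximate unit for FLqs withf(x_0)=0}), then shows via the factorization $\psi=\psi^{(1)}*\psi^{(2)}$ and a H\"older--Plancherel estimate (Lemma \ref{estimates for psi}, Lemma \ref{weighted Bhimani Ratnakumar lemma}, Remark \ref{f-f(x0) psi estimate for FLqs}) that $\|g\,\psi_\lambda(\cdot-x_0)\|_{{\mathcal F}L^q_s}\to 0$ as the plateau shrinks, so that $(1-\phi)g\in J_0(\{x_0\})$ approximates $f$; the H\"older split with exponent $2/q$ there is precisely where the hypothesis $q\le 2$ enters. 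You instead argue by duality: elements of $J(\{x_0\})^\perp$ are point-supported, the structure theorem reduces them to $\sum_{|\alpha|\le N}c_\alpha\partial^\alpha\delta_{x_0}$, and your polar-coordinate divergence estimate --- correctly anchored on the nonzero leading homogeneous part $P_N$, so that cancellation or directional vanishing cannot restore integrability --- forces $N=0$ when $s<n/q^\prime+1$. All the ingredients you invoke are available in the paper's setting: ${\mathcal S}({\mathbf R}^n)$ is dense in ${\mathcal F}L^q_s({\mathbf R}^n)$, the Fourier transform identifies the dual with $\{T : \langle\cdot\rangle^{-s}\widehat{T}\in L^{q^\prime}\}$, $J(\{x_0\})$ is the closure of the ideal $J_0(\{x_0\})$ so continuity of an annihilating functional suffices, and the Hahn--Banach equivalence $I=J \iff J^\perp\subseteq I^\perp$ holds for closed subspaces. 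Notably, your argument for $(i)$ nowhere uses $q\le 2$ and so proves the conclusion for all $1<q<\infty$ with $n/q^\prime<s<n/q^\prime+1$, formally more than the proposition states; what it does not yield, in contrast to the paper's method, are the quantitative localization estimates of Lemma \ref{weighted Bhimani Ratnakumar lemma} and Remark \ref{f-f(x0) psi estimate for FLqs}, which the paper reuses on the modulation-space side, and it is confined to sets whose point-supported annihilators can be classified. For $(ii)$ your argument coincides in substance with the paper's: the explicit computation with $f_\varepsilon$ there is exactly the statement that $\partial_1\delta_{x_0}$ is a bounded functional (since $\langle\cdot\rangle^{1-s}\in L^{q^\prime}$ when $s>n/q^\prime+1$) that annihilates $J(\{x_0\})$ but not $I(\{x_0\})$.
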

\begin{remark}
It is well-known that 
single points of ${\mathbf R}^n$
are sets of spectral synthesis for
${\mathcal F}L^1_s({\mathbf R}^n)$,
if $0 \leq s <1$
(see \cite[Theorem 2.7.9]{Reiter-Stegeman 2000}).
\end{remark}
To prove Proposition \ref{one point},
we use a modification of  \cite[Lemma 6.3.6]{Reiter-Stegeman 2000}.
\begin{lemma}
\label{estimates for psi}
Let $1<q \leq 2$.
Suppose that
$\psi^{(1)}, \psi^{(2)} \in C^\infty_c ({\mathbf R}^n)$ 
be such that
${\rm supp~} \psi^{(j)} \subset B(0,R)$
for some $R>0$ $(j=1,2)$.
Set $\psi := \psi^{(1)} * \psi^{(2)}$.
Then for $n / q^\prime < s  < n / q^\prime +1 $
and $\vartheta \in {\mathbf R}^n$
\begin{align*}
\big\|
 \langle  \cdot  \rangle^s 
\big( \widehat{\psi}  ( \cdot - \vartheta) - \widehat{\psi} ( \cdot ) \big)
\big\|_{L^q}
&\leq C_\psi
\Big(
| \vartheta  |^{s- \frac{n}{q^\prime}}
\max_{|t|\leq R}|e^{i \vartheta  t}-1|^{1- (  s- \frac{n}{q^\prime}  )   }
+|   \vartheta  |^s \Big).
\end{align*}
\end{lemma}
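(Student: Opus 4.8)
The plan is to pass to the Fourier side and exploit the convolution structure $\psi=\psi^{(1)}*\psi^{(2)}$. Writing $\alpha:=s-\frac{n}{q^\prime}\in(0,1)$ and $M:=\max_{|t|\le R}|e^{i\vartheta t}-1|$, note first that, with the paper's conventions, $\widehat{\psi}(\xi-\vartheta)-\widehat{\psi}(\xi)$ is the Fourier transform of $\psi(t)(e^{i\vartheta t}-1)$. Since $\widehat{\psi}=\widehat{\psi^{(1)}}\,\widehat{\psi^{(2)}}$, I would use the telescoping identity
\[
\widehat{\psi}(\xi-\vartheta)-\widehat{\psi}(\xi)=\underbrace{\widehat{h^{(1)}_\vartheta}(\xi)\,\widehat{\psi^{(2)}}(\xi)}_{=:T_2(\xi)}+\underbrace{\widehat{\psi^{(1)}}(\xi-\vartheta)\,\widehat{h^{(2)}_\vartheta}(\xi)}_{=:T_1(\xi)},
\]
where $h^{(j)}_\vartheta(t):=\psi^{(j)}(t)(e^{i\vartheta t}-1)$. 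The crucial point is that each $h^{(j)}_\vartheta$ is supported in $B_R(0)$, so $\|h^{(j)}_\vartheta\|_{L^1}\lesssim M$; this is exactly what produces the factor $\max_{|t|\le R}|e^{i\vartheta t}-1|$, rather than the weaker constant one would obtain from $\psi$ itself (supported in $B_{2R}(0)$). I would estimate the two terms separately: $T_2$ is concentrated near the origin and will produce the term $|\vartheta|^\alpha M^{1-\alpha}$, while $T_1$ carries the ``bump'' of $\widehat{\psi^{(1)}}(\cdot-\vartheta)$ near $\xi=\vartheta$ and will produce the term $|\vartheta|^s$.

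For both terms I would record two pointwise bounds on $\widehat{h^{(j)}_\vartheta}$: the zeroth-order bound $|\widehat{h^{(j)}_\vartheta}(\xi)|\le\|h^{(j)}_\vartheta\|_{L^1}\lesssim M$, and, differentiating once and using $|\partial(e^{i\vartheta t}-1)|\lesssim|\vartheta|$ together with $M\le R|\vartheta|$, the first-order bound $|\xi|\,|\widehat{h^{(j)}_\vartheta}(\xi)|\lesssim M+|\vartheta|\lesssim|\vartheta|$. Hence $|\widehat{h^{(j)}_\vartheta}(\xi)|\lesssim\min\!\big(M,|\vartheta|/|\xi|\big)$, with crossover radius $\sigma:=|\vartheta|/M$. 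I shall also use $M\le\min(2,R|\vartheta|)$ throughout.

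To estimate $T_2$, I would insert the bound $|\widehat{\psi^{(2)}}(\xi)|\lesssim\langle\xi\rangle^{-n}$ (available since $\psi^{(2)}\in C^\infty_c$) and split the weighted integral
\[
\big\|\langle\cdot\rangle^s T_2\big\|_{L^q}^q\lesssim\int_{{\mathbf R}^n}\langle\xi\rangle^{(s-n)q}\min\!\big(M,|\vartheta|/|\xi|\big)^q\,d\xi
\]
at $|\xi|=\sigma$ (handling $|\xi|\lesssim1$ separately). On $|\xi|\le\sigma$ one uses the value $M$, and the integral is dominated by its upper end precisely when $(s-n)q+n>0$, i.e. $s>n/q^\prime$; on $|\xi|>\sigma$ one uses $|\vartheta|/|\xi|$, and the integral converges at infinity precisely when $(s-n-1)q+n<0$, i.e. $s<n/q^\prime+1$. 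Both pieces then equal $M^q\sigma^{(s-n)q+n}=M^q\sigma^{\alpha q}=|\vartheta|^{\alpha q}M^{(1-\alpha)q}$, giving $\|\langle\cdot\rangle^s T_2\|_{L^q}\lesssim|\vartheta|^\alpha M^{1-\alpha}$. This step, in which the two hypotheses $n/q^\prime<s<n/q^\prime+1$ enter, one controlling each of the two frequency regimes, is the main obstacle and the heart of the lemma.

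For $T_1$, I would apply Peetre's inequality $\langle\xi\rangle^s\lesssim\langle\vartheta\rangle^s\langle\xi-\vartheta\rangle^s$ together with $\|\widehat{h^{(2)}_\vartheta}\|_{L^\infty}\lesssim M$ to obtain
\[
\big\|\langle\cdot\rangle^s T_1\big\|_{L^q}\lesssim\langle\vartheta\rangle^s\,\big\|\widehat{h^{(2)}_\vartheta}\big\|_{L^\infty}\,\big\|\langle\cdot\rangle^s\widehat{\psi^{(1)}}\big\|_{L^q}\lesssim\langle\vartheta\rangle^s M,
\]
the last factor being finite since $\widehat{\psi^{(1)}}$ is a Schwartz function. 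It then remains to verify the elementary inequality $\langle\vartheta\rangle^s M\lesssim|\vartheta|^\alpha M^{1-\alpha}+|\vartheta|^s$: for $|\vartheta|\ge1$ it follows from $M\le2$, and for $|\vartheta|\le1$ from $M=M^\alpha M^{1-\alpha}\le(R|\vartheta|)^\alpha M^{1-\alpha}$. Adding the estimates for $T_1$ and $T_2$ and absorbing all constants into $C_\psi$ yields the stated bound.
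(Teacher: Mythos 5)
Your proof is correct, but it takes a genuinely different route from the paper's. The paper starts from the same product structure $\widehat{\psi}=\widehat{\psi^{(1)}}\,\widehat{\psi^{(2)}}$ and the same telescoping, but then applies H\"older with exponents $2/q$ and $2/(2-q)$ together with the Plancherel theorem, converting each difference factor into $\|(e^{i\vartheta x}-1)\psi^{(j)}\|_{L^2}$ and performing the interpolation pointwise on the \emph{space} side via $|e^{i\vartheta x}-1|\leq M^{1-\alpha}\,(|\vartheta|\,|x|)^{\alpha}$ with $\| |x|^{\alpha}\psi^{(j)}\|_{L^2}<\infty$; the shifted smooth factor is absorbed by the Peetre-type bound $\langle\xi\rangle^s\lesssim\langle\vartheta\rangle^s\langle\xi-\vartheta\rangle^s$, just as in your $T_1$. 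You instead stay entirely on the frequency side: the $L^1$ bounds on $h^{(j)}_\vartheta$ and $\nabla h^{(j)}_\vartheta$ give $|\widehat{h^{(j)}_\vartheta}(\xi)|\lesssim\min(M,|\vartheta|/|\xi|)$, and the factor $|\vartheta|^{\alpha}M^{1-\alpha}$ emerges geometrically from splitting the weighted integral at $\sigma=|\vartheta|/M$. Your version buys two things: it avoids Plancherel/Hausdorff--Young, so it works verbatim for all $1\leq q<\infty$ (the paper's method is what forces $q\leq 2$), and it displays exactly where each endpoint of $n/q^\prime<s<n/q^\prime+1$ enters (the regimes below and above $\sigma$). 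The paper's version is shorter and needs no matching computation. Two points you glossed over are harmless but worth stating: the asymptotics for $\int_{|\xi|\leq\sigma}\langle\xi\rangle^{(s-n)q}\,d\xi$ use $\sigma\geq 1/R$ (from $M\leq R|\vartheta|$), so constants pick up $R$-dependence; and the annulus $\sigma<|\xi|\leq 1$ occurs only when $|\vartheta|<M$, where $|\vartheta|^q\leq|\vartheta|^{\alpha q}M^{(1-\alpha)q}$, so it too is absorbed into $C_\psi$.
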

\begin{proof}
We give the proof only for $1<q<2$: 
the case $q=2$ is similar.
We first  note that
since $\widehat{\psi} = \widehat{\psi^{(1)}} \cdot \widehat{\psi^{(2)}}$ we have
\begin{align*}
& 
\widehat{\psi} ( \xi-  \vartheta   ) - \widehat{\psi}(\xi)\\
&=
\Big( \widehat{\psi^{(1)}} (\xi-  \vartheta  )-
\widehat{\psi^{(1)}}(\xi) \Big) 
\widehat{\psi^{(2)}} ( \xi-  \vartheta   )
+\widehat{\psi^{(1)}}(\xi)
\Big( \widehat{\psi^{(2)}}
(\xi-   \vartheta  ) - \widehat{\psi^{(2)}}(\xi)\Big).
\end{align*} 
Then the H\"older inequality with 
$1 / (2/q) 
+ 1 /( 2/( 2-q)  )=1 $
and the Plancherel theorem show that
\begin{align*}
& 
\big\|
\langle \xi \rangle^s
\big( \widehat{\psi}  (\xi - \vartheta) 
- \widehat{\psi}(\xi) 
\big)
\big\|_{L^q({\mathbf R}^n_\xi)}
\\
&
\lesssim
\big\|
 \widehat{\psi^{(1)}}
( \xi -  \vartheta) -
\widehat{\psi^{(1)}}(\xi)  \big\|_{L^2({\mathbf R}^n_\xi) }
\big\|
\langle \xi \rangle^{s}
 \widehat{\psi^{(2)}}
( \xi  - \vartheta ) 
\big\|_{ L^{\frac{2q}{2-q} } ({\mathbf R}^n_\xi) }\\
& \quad +
\big\|
 \widehat{\psi^{(2)}}
( \xi -   \vartheta ) -
\widehat{\psi^{(2)}}(\xi)  \big\|_{L^2({\mathbf R}^n_\xi)}
\big\|
\langle \xi \rangle^{s}
\widehat{\psi^{(1)}}
( \xi  -  \vartheta  ) 
\big\|_{ L^{\frac{2q}{2-q} } ({\mathbf R}^n_\xi)  }\\
&=
\| {\mathcal F}_{x \to \xi}
[ (e^{ i \vartheta x}-1)\psi^{(1)} (x)] (\xi)   \|_{L^2({\mathbf R}^n_\xi)}
\langle 
 \vartheta 
\rangle^{s} 
\big\|  \langle \cdot \rangle^s
\widehat{\psi^{(2)}}  \big\|_{L^{ \frac{2q}{2-q}   }} \\
& \quad +
\| {\mathcal F}_{x \to \xi}
[ (e^{ i \vartheta x}-1)\psi^{(2)} (x)](\xi)   \|_{L^2({\mathbf R}^n_\xi)}
\langle 
 \vartheta 
\rangle^{s} 
\big\|  \langle \cdot \rangle^s
\widehat{\psi^{(1)}}   \big\|_{L^{ \frac{2q}{2-q}  }}.
\end{align*}
Note
$|e^{ i  \vartheta  x  } -1 | 
\leq \min \{ 2,    |   \vartheta  x  | \}$.
The Plancherel theorem shows 
$$
\| {\mathcal F}_{x \to \xi}
[ (e^{ i  \vartheta  x}  -1)\psi^{(j)} (x) ] (\xi)  \|_{L^2({\mathbf R}^n_\xi)}
=
(2 \pi)^{ \frac{n}{2} }
\|
 (e^{ i  \vartheta  x}-1)\psi^{(j)} (x)  \|_{L^2({\mathbf R}^n_x)}
$$
for $j=1,2$.
Since ${\rm supp~} \psi^{(j)}  \subset B_R(0)$ $(j=1,2)$,
we obtain
\begin{align*}
&
\| {\mathcal F}_{x \to \xi}
[ (e^{ i  \vartheta  x}  -1)\psi^{(j)} (x) ]  (\xi) \|_{L^2({\mathbf R}^n_\xi)} 
( 1 + |   \vartheta   |^s )
\\
&\lesssim
\Big( \max_{ |x| \leq R}|e^{i   \vartheta     x}   -1| \Big)^{1-(s- \frac{n}{q^\prime})}
\big\|  |e^{i    \vartheta    x}-1|^{s- \frac{n}{q^\prime}}
|\psi^{(j)}  (x)| \big\|_{L^2({\mathbf R}^n_x)}
+ \| \psi^{(j)}  \|_{L^2} 
|   \vartheta   |^s  \\
&\lesssim
|   \vartheta    |^{s- \frac{n}{q^\prime}}
\Big( \max_{ |x| \leq R}|e^{i   \vartheta    x}-1| \Big)^{1-(s- \frac{n}{q^\prime})}
\big\|
|x|^{s- \frac{n}{q^\prime}}|
\psi^{(j)} (x)| \big\|_{L^2({}\mathbf R)^n_x}
 + |   \vartheta    |^s,
\end{align*}
which yields the desired inequality.
\end{proof}
\begin{lemma}
\label{weighted Bhimani Ratnakumar lemma}

Let  $1<q \leq 2$,
$n / q^\prime < s < n / q^\prime + 1$, $f \in {\mathcal F}L^1_s ({\mathbf R}^n)$, 
$x_0 \in {\mathbf R}^n$
and $\varepsilon >0$.
Then there exists $\phi \in C_c^\infty ({\mathbf R}^n)$ such that
$(i)$ $\|  (f- f (x_0) ) \phi \|_{M^{1,q}_s} < \varepsilon$.
$(ii)$
$\phi(x) =1$ in some neighborhood of $x_0$.
\end{lemma}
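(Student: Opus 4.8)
The plan is to reduce to the case $x_0=0$, which is harmless: both $\|\cdot\|_{M^{1,q}_s}$ and $\|\cdot\|_{{\mathcal F}L^q_s}$ are invariant under spatial translation (a translation only modulates $\widehat{f}$, leaving $|\widehat f|$ and the weight $\langle\cdot\rangle^s$ untouched), and $f(x_0)$ becomes the value at the origin of the translated function. Fix once and for all a convolution bump $\phi_0=\psi^{(1)}*\psi^{(2)}$ with $\psi^{(j)}\in C^\infty_c({\mathbf R}^n)$ supported in $B_{R_0}(0)$, arranged so that $\phi_0\equiv 1$ near $0$ (take $\int\psi^{(2)}=1$ and $\psi^{(1)}\equiv 1$ on a neighborhood of $\operatorname{supp}\psi^{(2)}$). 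For $0<R\leq 1$ set $\phi^{(R)}(x):=\phi_0(x/R)$. Then $\phi^{(R)}\in C^\infty_c$, $\phi^{(R)}\equiv 1$ near $0$ for every $R$ — this already gives $(ii)$ — and $\operatorname{supp}\phi^{(R)}$ shrinks to $\{0\}$ as $R\to 0$ while staying inside the fixed ball $B_{R_0}(0)$. The whole point is then to show $\|(f-f(0))\phi^{(R)}\|_{M^{1,q}_s}\to 0$ as $R\to 0$ and pick $R$ small.

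First I would pass from $M^{1,q}_s$ to ${\mathcal F}L^q_s$. Since $(f-f(0))\phi^{(R)}$ is supported in the fixed ball $B_{R_0}(0)$, Lemma \ref{inclusion relation (FLqs)_c} (with $p=1$ and a single auxiliary cutoff equal to $1$ on $B_{R_0}(0)$) gives $\|(f-f(0))\phi^{(R)}\|_{M^{1,q}_s}\lesssim \|(f-f(0))\phi^{(R)}\|_{{\mathcal F}L^q_s}$ with a constant independent of $R$. Next, using $\widehat f\in L^1$ (because $s>n/q'\geq 0$), the identity $f(0)=(2\pi)^{-n}\int_{{\mathbf R}^n}\widehat f(\eta)\,d\eta$, and the rule $(fg)^\wedge=(2\pi)^{-n}\widehat f*\widehat g$, a direct computation of the Fourier transform gives
$$
\big((f-f(0))\phi^{(R)}\big)^\wedge(\xi)=(2\pi)^{-n}\int_{{\mathbf R}^n}\widehat f(\eta)\,\big(\widehat{\phi^{(R)}}(\xi-\eta)-\widehat{\phi^{(R)}}(\xi)\big)\,d\eta ,
$$
so that Minkowski's integral inequality bounds $\|(f-f(0))\phi^{(R)}\|_{{\mathcal F}L^q_s}$ by $(2\pi)^{-n}\int_{{\mathbf R}^n}|\widehat f(\eta)|\,\|\langle\cdot\rangle^s(\widehat{\phi^{(R)}}(\cdot-\eta)-\widehat{\phi^{(R)}}(\cdot))\|_{L^q}\,d\eta$.

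The heart of the matter is the $R$-dependence of the inner norm. Write $\alpha:=s-n/q'\in(0,1)$. Using $\widehat{\phi^{(R)}}(\xi)=R^n\widehat{\phi_0}(R\xi)$, the substitution $\zeta=R\xi$, and the elementary bound $\langle\zeta/R\rangle\leq R^{-1}\langle\zeta\rangle$ valid for $R\leq 1$, one finds (since $n-n/q=n/q'$ and $n/q'-s=-\alpha$)
$$
\big\|\langle\cdot\rangle^s(\widehat{\phi^{(R)}}(\cdot-\eta)-\widehat{\phi^{(R)}}(\cdot))\big\|_{L^q}\leq R^{-\alpha}\big\|\langle\cdot\rangle^s(\widehat{\phi_0}(\cdot-R\eta)-\widehat{\phi_0}(\cdot))\big\|_{L^q}.
$$
Now I apply Lemma \ref{estimates for psi} to the fixed $\phi_0=\psi^{(1)}*\psi^{(2)}$ with shift $\vartheta=R\eta$; its two terms $|R\eta|^{\alpha}(\cdots)^{1-\alpha}$ and $|R\eta|^{s}$ carry factors $R^{\alpha}$ and $R^{s}$, which combine with the prefactor $R^{-\alpha}$, and because $s-\alpha=n/q'>0$ this yields
$$
\big\|\langle\cdot\rangle^s(\widehat{\phi^{(R)}}(\cdot-\eta)-\widehat{\phi^{(R)}}(\cdot))\big\|_{L^q}\leq C\Big(|\eta|^{\alpha}\max_{|t|\leq R_0}|e^{iR\eta t}-1|^{1-\alpha}+R^{n/q'}|\eta|^{s}\Big).
$$

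I expect the last display to be the crux (and the main obstacle). The term $|\vartheta|^s$ emerging from Lemma \ref{estimates for psi} does not decay in $\eta$ and, integrated against $|\widehat f(\eta)|$, would only produce something of size $\|f\|_{{\mathcal F}L^1_s}$; it is precisely the dilation that converts it into the gain $R^{n/q'}|\eta|^s$, vanishing as $R\to 0$. To finish, I multiply by $|\widehat f(\eta)|$ and integrate: the second term contributes $R^{n/q'}\int_{{\mathbf R}^n}|\eta|^s|\widehat f(\eta)|\,d\eta\to 0$ because $f\in{\mathcal F}L^1_s$, while the first term tends to $0$ by dominated convergence, its integrand being bounded by $2|\widehat f(\eta)|\langle\eta\rangle^s\in L^1$ (as $\alpha<s$) and vanishing pointwise since $\max_{|t|\leq R_0}|e^{iR\eta t}-1|\to 0$ as $R\to 0$. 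Hence $\|(f-f(0))\phi^{(R)}\|_{M^{1,q}_s}\to 0$, and choosing $R$ small enough delivers $(i)$.
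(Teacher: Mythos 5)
Your proof is correct and follows essentially the same route as the paper's: the same dilated bump $\psi^{(1)}*\psi^{(2)}$ equal to $1$ near the point (your $R$ is the paper's $1/\lambda$), the same reduction to $x_0=0$, the same Fourier-side identity $\big((f-f(0))\phi^{(R)}\big)^\wedge(\xi)=(2\pi)^{-n}\int\widehat f(\eta)\big(\widehat{\phi^{(R)}}(\xi-\eta)-\widehat{\phi^{(R)}}(\xi)\big)\,d\eta$ with Minkowski, the same rescaled application of Lemma \ref{estimates for psi} with $\vartheta=R\eta$, and the same dominated-convergence conclusion using $f\in{\mathcal F}L^1_s$. The only cosmetic difference is the passage $\|\cdot\|_{M^{1,q}_s}\lesssim\|\cdot\|_{{\mathcal F}L^q_s}$ for functions supported in the fixed ball: you invoke Lemma \ref{inclusion relation (FLqs)_c} with a fixed cutoff (correctly noting the constant is then uniform in $R$), whereas the paper re-derives this bound by a direct short-time Fourier transform computation with a Gaussian window and Young's inequality.
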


\begin{proof}
Note that 
there exist $\psi^{(1)}, \psi^{(2)}  \in C^\infty_c({\mathbf R}^n)$
such that 
${\rm supp~}  \psi^{(j)} \subset B_2(0)  $ $(j=1,2)$ and 
$\psi := \psi^{(1)}  * \psi^{(2)} $ 
satisfies 
$\psi(x)=1$
on $B_1(0)$ and 
${\rm supp~}\psi \subset B_5(0) $
(see \cite{Feichtinger-Kobayashi-Sato}).
For $\lambda >5$, 
we set $\psi_\lambda(x) :=\psi(\lambda x)$ and 
$
 h^\lambda(x) := (f(x)-f(x_0)) \psi_\lambda(x-x_0).
$
If  $x \in {\rm supp~} h^\lambda$,
then $\psi (x-x_0) =1$. Thus 
$h^\lambda (x) = h^\lambda (x) \psi (x-x_0)$.
We consider the case $x_0=0$; 
the other case can be treated by 
considering $\psi(x-x_0)$ instead of $\psi(x)$.
Let $g_0 (t) := e^{- |t|^2/ 2}$ $(t \in {\mathbf R}^n)$.
Then
\eqref{basic STFT} 
shows
\begin{align*}
\| h^\lambda \|_{M^{1,q}_s}
&= \| \|  \langle \xi \rangle^s  V_{g_0} h^\lambda
 (x,\xi) \|_{L^1({\mathbf R}^n_x)}
\|_{L^q ({\mathbf R}^n_\xi)}  
\approx
\| \| \langle \xi \rangle^s
V_{ \widehat{g_0} } \widehat{ h^\lambda}  (\xi, -x) \|_{L^1({\mathbf R}^n_x)}
\|_{L^q({\mathbf R}^n_\xi)}.
\end{align*}
Since  $\widehat{g_0} =(2 \pi)^{ \frac{n}{2} }  g_0$, $g_0^* =g_0$ and
$h^\lambda (x) =h^\lambda (x) \psi (x)$,
we obtain by 
\eqref{basic STFT} 
\begin{align*}
V_{ \widehat{g_0}}   \widehat{ h^\lambda } (\xi, -x)
&= (2 \pi)^{ - \frac{n}{2} }
V_{g_0}  ( \widehat{h^\lambda} * \widehat{\psi}) (\xi, -x)
= (2 \pi)^{ - \frac{n}{2} }
e^{ i x \xi}(\widehat{h^\lambda} * \widehat{\psi} * M_{-x}g_0)(\xi).
\end{align*}
The Minkowski inequality for integrals 
and the Young inequality yield
\begin{align*}
\| h^\lambda \|_{M^{1,q}_s}
&
=
(2 \pi)^{-n}
\| \| \langle \xi \rangle^s
V_{ \widehat{g_0} } \widehat{  h^\lambda  }  (\xi, -x) \|_{L^1({\mathbf R}^n_x)}
\|_{L^q({\mathbf R}^n_\xi)} \\
&\lesssim
\| \| \langle \xi \rangle^s
(\widehat{h^\lambda} * \widehat{\psi} * M_{-x}g_0)(\xi)
 \|_{L^q({\mathbf R}^n_\xi)}
\|_{L^1({\mathbf R}^n_x)} \\
& \leq
\|
\| \langle \xi \rangle^s \widehat{ h^\lambda  } (\xi)
 \|_{L^q({\mathbf R}^n_\xi)}
\| \langle \xi \rangle^s (\widehat{\psi} *M_{-x} g_0) (\xi)
\|_{L^1 ({\mathbf R}^n_\xi)}
\|_{L^1({\mathbf R}^n_x)} \\
&=
\| \langle \cdot \rangle^s \widehat{ h^\lambda  }
 \|_{L^q}
\| \psi \|_{M^{1,1}_s}.
\end{align*}
Since
$
\widehat{ h^\lambda  } (\xi)
= (2 \pi)^{-n} (\widehat{ \psi_\lambda } * \widehat{f}) (\xi) 
- f(0) \widehat{ \psi_\lambda } (\xi)
$
and
$\widehat{ \psi_\lambda  } (\xi) 
=  \lambda^{-n}  \widehat{\psi}  ( \xi / \lambda )  $,
we have
\begin{align*}
\widehat{h^\lambda}(\xi)
&=
\frac{1}{(2 \pi)^n}
\int_{{\mathbf R}^n}
\widehat{f}( \eta) \widehat{\psi_\lambda} 
(\xi- \eta) d \eta -
\frac{1}{(2 \pi)^n}
\Big(
\int_{{\mathbf R}^n}
\widehat{f} (\eta) d \eta
\Big)  
\widehat{\psi_\lambda}(\xi) \\
&=
\frac{1}{ (2 \pi \lambda)^n}
\int_{{\mathbf R}^n}
\widehat{f} (\eta) 
\Big(
\widehat{\psi}
\Big( \frac{\xi - \eta}  {\lambda}
\Big) - \widehat{\psi} \Big( \frac{\xi}{\lambda} \Big)
\Big) d  \eta.
\end{align*}
Then the Minkowski inequality for integrals 
and
Lemma \ref{estimates for psi} 
choosing $ \vartheta =  \eta /  \lambda$
and $R=4$
yield that
\begin{align*}
\| \langle \cdot \rangle^s \widehat{ h^\lambda  } 
 \|_{L^q({\mathbf R}^n)} 
&\leq
\lambda^{-n}
\int_{{\mathbf R}^n}
\Big\|
 \langle  \cdot  \rangle^s 
\Big( \widehat{\psi}  \Big(  \frac{\cdot - \eta}
{\lambda}  \Big) - \widehat{\psi} \Big( \frac{\cdot}{\lambda} \Big) \Big)
\Big\|_{L^q}
 |\widehat{f}(\eta)|
 d \eta  \\
&
\lesssim
\lambda^{-n + s+ \frac{n}{q} }
\int_{{\mathbf R}^n}
\Big\|
 \langle  \cdot  \rangle^s 
\Big( \widehat{\psi}  \Big( \cdot -  \frac{ \eta}
{\lambda}  \Big) - \widehat{\psi} ( \cdot ) \Big)
\Big\|_{L^q}
 |\widehat{f}(\eta)|
 d \eta  
\\
&
\lesssim
\lambda^{ s- \frac{n}{q^\prime} }
\int_{{\mathbf R}^n}
\Big(
\Big| \frac{\eta}{\lambda}  \Big|^{s- \frac{n}{q^\prime}}
\Big( \max_{|t|\leq 4}|e^{i\frac{\eta}{\lambda}t}-1| \Big)^{1- (  s- \frac{n}{q^\prime}  )   }
+ \Big|   \frac{\eta}{\lambda}   \Big|^s \Big)
 |\widehat{f} (\eta)| d \eta \\
& \lesssim
\int_{{\mathbf R}^n}
\Big(
\Big(
\max_{|t|\leq 4}|e^{i\frac{\eta}{\lambda}t}-1|
\Big)^{1-(s- \frac{n}{q^\prime} )}
+ \lambda^{- \frac{n}{q^\prime} }
\Big)
\langle \eta \rangle^s
|\widehat{f} (\eta)|
d \eta.
\end{align*}
We observe that  
$\left(
\max_{|t|\leq 4}|e^{i\frac{\eta}{\lambda}t}-1|
\right)^{1-(s- \frac{n}{q^\prime})}  + \lambda^{ - \frac{n}{q^\prime} } \leq 3$ and
$$
\Big(
\max_{|t|\leq 4}|e^{i\frac{\eta}{\lambda}t}-1|
\Big)^{1-(s- \frac{n}{q^\prime})}  + \lambda^{- \frac{n}{q^\prime}} \to 0
\quad
(\lambda \to \infty).
$$
Since $f \in {\mathcal F}L^1_s ({\mathbf R}^n)$,
we see that
$\| \langle \cdot \rangle^s \widehat{ h^\lambda  } 
 \|_{L^q}
\to 0 $
$(\lambda \to \infty)$.
Therefore,
for any $\varepsilon >0$, there exists
$\lambda_0 >0$ such that
$
\| h^{\lambda_0} \|_{M^{1,q}_s} <   \varepsilon$.
Hence,
by putting  $\phi (x)=\psi_{\lambda_0} (x)$,
we have the desired result.
\end{proof}
\begin{remark}
\label{f-f(x0) psi estimate for FLqs}
Let $1<q \leq 2$, $n / q^\prime <s<  n / q^\prime +1 $,
$f \in {\mathcal F}L^1_s ({\mathbf R}^n)$
and $x_0 \in {\mathbf R}^n$. 
Since  $M^{1,q}_s({\mathbf R}^n)  \hookrightarrow {\mathcal F}L^q_s({\mathbf R}^n)$,
Lemma \ref{weighted Bhimani Ratnakumar lemma} implies
that
for any $\varepsilon >0$, there exists $\phi \in C^\infty_c ({\mathbf R}^n)$
such that 
$(i)$ $\| (f- f(x_0) ) \phi \|_{{\mathcal F}L^q_s} < \varepsilon$.
$(ii)$ $\phi(x) =1$ in some neighborhood of $x_0$.
\end{remark}

\subsubsection{The proof of Proposition \ref{one point}}
\noindent
$(i)$
It suffices to show
$ I(\{ x_0 \}) \subset  \overline{  J_0(\{ x_0 \}) }^{\| \cdot \|_{{\mathcal F}L^q_s}} = J(\{ x_0 \})$.
Let $f \in I(\{ x_0 \})$ and $\varepsilon >0$. 
By Lemma \ref{approximate unit for FLqs withf(x_0)=0}
there exists $g \in C^\infty_c ({\mathbf R}^n)$
such that $g(x_0)=0$ and
$\| f -g \|_{{\mathcal F}L^q_s} < 
\varepsilon / 2$.
Moreover, by Remark \ref{f-f(x0) psi estimate for FLqs}
there exists $\phi \in C^\infty_c ({\mathbf R}^n)$ such that
$\| (g - g(x_0)) \phi \|_{{\mathcal F}L^q_s} = \|  g \phi \|_{{\mathcal F}L^q_s} 
< \varepsilon / 2$
and $\phi(x) =1$ on a neighborhood of $x_0$.
Let $\tau= (1- \phi) g$.
Then 
$\tau \in {\mathcal F}L^q_s ({\mathbf R}^n)$
and 
$\tau(x) =0$ on a neighborhood of $x_0$.
Therefore $\tau \in J_0 (\{ x_0 \})$ and
$
\| f -\tau \|_{{\mathcal F}L^q_s}
 \leq
\| f -g \|_{{\mathcal F}L^q_s} + \| \phi g \|_{{\mathcal F}L^q_s}
< \varepsilon,
$
which implies
$
f \in \overline{  J_0(\{ x_0 \}) }^{\| \cdot \|_{{\mathcal F}L^q_s}}.
$

\noindent
$(ii)$ Let $x_0 = (x^{(0)}_1, \cdots, x_n^{(0)}) \in {\mathbf R}^n$.
Contrary to our claim, 
suppose that $\{ x_0 \}$ is a set of spectral synthesis 
for ${\mathcal F}L^q_s({\mathbf R}^n)$.
Let $\phi \in C^\infty_c({\mathbf R}^n)$ be such that
${\rm supp~} \phi  \subset B_1(x_0)$ and
$\phi (x) =1$ on $B_{1/2}(x_0)$,
and define
$$
f(x) = \big( (x_1 - x_1^{(0) } )   + \cdots + (x_n -x^{(0)}_n ) \big) g(x),
\quad
x= (x_1, \cdots, x_n) \in {\mathbf R}^n.
$$
Then  $f \in  I(\{ x_0 \})  $.
Since  $I(\{ x_0 \})  = J(\{ x_0 \})$, 
we see that for any $\varepsilon >0$, there exists $f_\varepsilon \in {\mathcal F}L^q_s ({\mathbf R}^n)$
such that $f_\varepsilon (x) =0$ in a neighborhood of $x_0$ and 
$
\| f - f_\varepsilon \|_{{\mathcal F}L^q_s} < \varepsilon
$. 
On the  other hand, 
$f,f_\varepsilon \in L^1({\mathbf R}^n) \cap {\mathcal F}L^1({\mathbf R}^n) $.
Applying the Fourier inversion formula 
and the H\"older inequality one has
\begin{align*}
&\Big|   \frac{\partial f}{\partial x_1} (x_0) - \frac{\partial f_\varepsilon}{\partial x_1} (x_0)    \Big|
=
(2 \pi)^{-n}
\Big|
\int_{{\mathbf R}^n} i \xi_1 (\widehat{f} (\xi)  - \widehat{f_\varepsilon}(\xi))
e^{ix \xi} d \xi
\Big| \\
& \leq 
(2 \pi)^{-n}  \int_{{\mathbf R}^n}   \langle \xi \rangle^{1-s}
\langle \xi \rangle^{s}
| \widehat{f} (\xi)  - \widehat{f_\varepsilon} (\xi) | d \xi \leq 
(2 \pi)^{-n}
\varepsilon
\| \langle \cdot \rangle^{1-s} \|_{L^{q^\prime}} .
\end{align*}
Since 
$ \frac{\partial f}{\partial x_1} (x_0) =1$,
$\frac{\partial f_\varepsilon}{\partial x_1} (x_0) =0$
and $\varepsilon>0$ is arbitrary,
this gives a contradiction.
Hence, $\{ x_0 \}$ is not  a set of spectral synthesis 
for ${\mathcal F}L^q_s({\mathbf R}^n)$.\\

To prove Theorem \ref{spectral synthesis for Mpqs},
we prepare several lemmas.
In the following, 
$({\mathcal F}L^q_s)_c$ 
denotes the space defined in
Lemma \ref{inclusion relation (FLqs)_c}.
\begin{lemma}
\label{ideal theorem lemma 1}
Let $q=1$ and $s \geq 0$, or $1<q<\infty$ and $s> n / q^\prime$.
Suppose that  $I$ is a closed ideal in ${\mathcal F}L^q_s({\mathbf R}^n)$.
Then  $ I =
\overline{ I \cap ( {\mathcal F}L^q_s)_c}^{\|  \cdot \|_{{\mathcal F}L^q_s}}. $
\end{lemma}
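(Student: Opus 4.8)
We must show that a closed ideal $I$ in ${\mathcal F}L^q_s({\mathbf R}^n)$ coincides with the closure of $I \cap ({\mathcal F}L^q_s)_c$, where $({\mathcal F}L^q_s)_c$ is the subalgebra of compactly supported elements. The inclusion $\overline{I \cap ({\mathcal F}L^q_s)_c}^{\|\cdot\|} \subset I$ is immediate since $I$ is closed and $I \cap ({\mathcal F}L^q_s)_c \subset I$. So the content is the reverse inclusion: every $f \in I$ can be approximated in the ${\mathcal F}L^q_s$-norm by elements of $I$ with compact support.

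\medskip

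\textbf{Plan.} The natural strategy is to truncate $f$ by multiplying against a compactly supported cutoff and to use that such truncation stays inside $I$ (because $I$ is an ideal) while approximating $f$ in norm. Fix $f \in I$ and $\varepsilon > 0$. By Lemma \ref{approximate unit for FLqs} there exists $\phi \in C^\infty_c({\mathbf R}^n)$ with $\|\phi f - f\|_{{\mathcal F}L^q_s} < \varepsilon$. The key point is that $\phi f \in I \cap ({\mathcal F}L^q_s)_c$. Indeed, $\phi f \in I$ because $I$ is an ideal and $\phi \in C^\infty_c({\mathbf R}^n) \subset {\mathcal F}L^q_s({\mathbf R}^n)$, so $\phi f$ is a product of an algebra element with the ideal element $f$. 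Moreover ${\rm supp}(\phi f) \subset {\rm supp}\,\phi$ is compact, so $\phi f \in ({\mathcal F}L^q_s)_c$. Since $\varepsilon$ was arbitrary this shows $f \in \overline{I \cap ({\mathcal F}L^q_s)_c}^{\|\cdot\|_{{\mathcal F}L^q_s}}$, completing the argument.

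\medskip

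\textbf{Where to be careful.} The only genuine subtlety is the claim ${\rm supp}(\phi f) \subset {\rm supp}\,\phi$. Here one must remember that for $f \in {\mathcal F}L^q_s({\mathbf R}^n)$ (under the hypotheses $q=1,\ s\ge 0$ or $1<q<\infty,\ s>n/q^\prime$) one has $\widehat f \in L^1$, so by the remarks following \eqref{multiplication constant for FLqs} the function $f$ agrees with a genuine continuous function via the inversion formula; hence the pointwise product $\phi f$ is well-defined and its support is controlled in the classical sense. This is exactly the regularity that the standing hypotheses on $(q,s)$ guarantee, so no distributional product issue arises. The approximate-unit Lemma \ref{approximate unit for FLqs} supplies the compactly supported $\phi$ producing norm approximation, and the ideal property does the rest. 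I do not anticipate a real obstacle beyond recording these two observations cleanly.
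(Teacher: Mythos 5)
Your proof is correct and follows essentially the same route as the paper: approximate $f$ by $\phi f$ using the approximate units of Lemma \ref{approximate unit for FLqs}, then observe $\phi f \in I \cap ({\mathcal F}L^q_s)_c$ because $I$ is an ideal and $C^\infty_c({\mathbf R}^n) \subset {\mathcal F}L^q_s({\mathbf R}^n)$. Your extra remark justifying ${\rm supp}(\phi f) \subset {\rm supp}\,\phi$ via the continuity of $f$ under the standing hypotheses is a sound (if tacit in the paper) clarification, not a deviation.
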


\begin{proof}
We need only consider
$I \subset   \overline{ I \cap ( {\mathcal F}L^q_s)_c}^{\|  \cdot \|_{{\mathcal F}L^q_s}}$.  
Let $f \in I$ and $\varepsilon >0$.
By Lemma  \ref{approximate unit for FLqs}
there exists $\phi \in C^\infty_c({\mathbf R}^n)$
satisfying  $\| \phi f    - f \|_{{\mathcal F}L^q_s} < \varepsilon$.
Since $C^\infty_c ({\mathbf R}^n) \subset {\mathcal F}L^q_s ({\mathbf R}^n)$ and 
$I$ is an ideal in ${\mathcal F}L^q_s({\mathbf R}^n)$, we get
$\phi f \in I \cap ( {\mathcal F}L^q_s )_c$,
and thus $I \subset \overline{ I \cap ( {\mathcal F}L^q_s)_c}^{\|  \cdot \|_{{\mathcal F}L^q_s}}$.
\end{proof}
\begin{lemma}
\label{ideal theorem lemma 2}
Let $1 \leq p \leq 2$.
Suppose that $q=1$ and $s \geq 0$, or $1<q \leq p^\prime$ and 
$s> n / q^\prime$.
For  closed ideals $I$ and $I^\prime$ in
${\mathcal F}L^q_s({\mathbf R}^n)$, 
$(i)$ $I \cap M^{p,q}_s({\mathbf R}^n)$ is a closed ideal
in $M^{p,q}_s({\mathbf R}^n)$.
$(ii)$
If $I \cap M^{p,q}_s({\mathbf R}^n)
= I^\prime \cap M^{p,q}_s({\mathbf R}^n)$,
then we have $I=I^\prime$.
\end{lemma}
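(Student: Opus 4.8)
The plan is to exploit the two complementary embeddings established above together with the reduction in Lemma~\ref{ideal theorem lemma 1}. For part $(i)$, I would first check that $I \cap M^{p,q}_s({\mathbf R}^n)$ is an ideal: if $f \in M^{p,q}_s({\mathbf R}^n)$ and $g \in I \cap M^{p,q}_s({\mathbf R}^n)$, then $fg \in M^{p,q}_s({\mathbf R}^n)$ because $M^{p,q}_s({\mathbf R}^n)$ is a multiplication algebra, while Lemma~\ref{inclusion Mpqs subset FLqs} gives $f \in {\mathcal F}L^q_s({\mathbf R}^n)$, so $fg \in I$ since $I$ is an ideal in ${\mathcal F}L^q_s({\mathbf R}^n)$; hence $fg \in I \cap M^{p,q}_s({\mathbf R}^n)$. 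Closedness follows from the continuity of the embedding $M^{p,q}_s({\mathbf R}^n) \hookrightarrow {\mathcal F}L^q_s({\mathbf R}^n)$: a sequence in $I \cap M^{p,q}_s({\mathbf R}^n)$ that converges in $M^{p,q}_s({\mathbf R}^n)$ also converges in ${\mathcal F}L^q_s({\mathbf R}^n)$, so its limit lies in $I$ (closed in ${\mathcal F}L^q_s({\mathbf R}^n)$) and in $M^{p,q}_s({\mathbf R}^n)$ (a Banach space), hence in the intersection.

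For part $(ii)$, by symmetry it suffices to prove $I \subset I^\prime$. The key step is the inclusion
$$
I \cap ( {\mathcal F}L^q_s)_c \subset I^\prime .
$$
Indeed, if $f \in I \cap ( {\mathcal F}L^q_s)_c$, then Lemma~\ref{inclusion relation (FLqs)_c} yields $f \in M^{p,q}_s({\mathbf R}^n)$, whence $f \in I \cap M^{p,q}_s({\mathbf R}^n) = I^\prime \cap M^{p,q}_s({\mathbf R}^n) \subset I^\prime$. Now I would invoke Lemma~\ref{ideal theorem lemma 1}, which writes $I = \overline{ I \cap ( {\mathcal F}L^q_s)_c }^{\,\| \cdot \|_{{\mathcal F}L^q_s}}$; since $I^\prime$ is closed in ${\mathcal F}L^q_s({\mathbf R}^n)$ and contains $I \cap ( {\mathcal F}L^q_s)_c$, passing to the closure in ${\mathcal F}L^q_s({\mathbf R}^n)$ gives $I \subset I^\prime$. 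Interchanging the roles of $I$ and $I^\prime$ then yields $I = I^\prime$.

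The argument is in essence a careful bookkeeping of inclusions, so there is no single hard computation; the content is entirely in the previously established lemmas. The one point that must be handled with care is the \emph{direction} of the two embeddings: the general inclusion $M^{p,q}_s({\mathbf R}^n) \hookrightarrow {\mathcal F}L^q_s({\mathbf R}^n)$ is what makes $I \cap M^{p,q}_s({\mathbf R}^n)$ an ideal and keeps it closed, whereas it is the reverse inclusion for compactly supported elements, $( {\mathcal F}L^q_s)_c \hookrightarrow M^{p,q}_s({\mathbf R}^n)$, that lets one transfer information back from $I^\prime \cap M^{p,q}_s({\mathbf R}^n)$ to $I^\prime$. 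The role of Lemma~\ref{ideal theorem lemma 1} is precisely to guarantee that the compactly supported part of a closed ideal is dense enough to recover the whole ideal, which is what closes the gap in part $(ii)$.
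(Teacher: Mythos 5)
Your proposal is correct and follows essentially the same route as the paper: part $(i)$ via the multiplication algebra property of $M^{p,q}_s({\mathbf R}^n)$ and the embedding $M^{p,q}_s({\mathbf R}^n) \hookrightarrow {\mathcal F}L^q_s({\mathbf R}^n)$, and part $(ii)$ by reducing to the compactly supported elements through $({\mathcal F}L^q_s)_c \hookrightarrow M^{p,q}_s({\mathbf R}^n)$ and then applying Lemma~\ref{ideal theorem lemma 1}. The only cosmetic difference is that you prove one inclusion and invoke symmetry, whereas the paper derives the equality $I \cap ({\mathcal F}L^q_s)_c = I^\prime \cap ({\mathcal F}L^q_s)_c$ directly before closing up; the substance is identical.
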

\begin{proof}
$(i)$  Recall that $M^{p,q}_s ({\mathbf R}^n)$ is a multiplication algebra.
Moreover $M^{p,q}_s({\mathbf R}^n)  \hookrightarrow {\mathcal F}L^q_s({\mathbf R}^n)$
(see Lemma \ref{inclusion Mpqs subset FLqs})
and $I$ is a ideal in ${\mathcal F}L^q_s({\mathbf R}^n)$.
Thus $I \cdot M^{p,q}_s \subset {\mathcal F}L^q_s \cdot I \subset I$.
Hence $I \cap M^{p,q}_s$ is an ideal in $M^{p,q}_s({\mathbf R}^n)$.
To see that  $I \cap M^{p,q}_s({\mathbf R}^n)$ is  closed,
let $f \in  \overline{ I \cap M^{p,q}_s({\mathbf R}^n)}^{\| \cdot \|_{M^{p,q}_s}} $.
Then there exists $\{ f_n \}^\infty_{n=1} \subset I \cap M^{p,q}_s({\mathbf R}^n)$
such that $\| f_n - f \|_{M^{p,q}_s} \to 0$ $(n \to \infty)$.
Thus Lemma \ref{inclusion Mpqs subset FLqs} gives $\| f_n - f \|_{{\mathcal F}L^q_s} \to 0$ $(n \to \infty)$.
Moreover, since  $M^{p,q}_s({\mathbf R}^n)$ is complete
and $I$ is closed in ${\mathcal F}L^q_s({\mathbf R}^n)$,
we have  $f \in I \cap M^{p,q}_s({\mathbf R}^n)$,
which shows $ I \cap M^{p,q}_s({\mathbf R}^n)$ is closed.
$(ii)$ Since $I \cap M^{p,q}_s  \cap ( {\mathcal F}L^q_s )_c
= I^\prime \cap M^{p,q}_s \cap ( {\mathcal F}L^q_s )_c
$
and $({\mathcal F}L^q_s )_c \hookrightarrow M^{p,q}_s ({\mathbf R}^n)$
(see Lemma \ref{inclusion relation (FLqs)_c}),
one has $I \cap ({\mathcal F}L^q_s )_c =  I^\prime \cap ({\mathcal F}L^q_s )_c$.
Thus  Lemma \ref{ideal theorem lemma 1} yields  $I=I^\prime$. 
\end{proof}
\begin{proposition}
\label{ideal theorem}
Let $1 \leq p \leq 2$. Suppose that
$q=1$ and $s \geq 0$, or $1<q \leq p^\prime$ and
$s> n / q^\prime$.
For any closed ideal $I_M$ in $M^{p,q}_s({\mathbf R}^n)$,
the ideal $I_F  := \overline{I_M}^{ \| \cdot \|_{{\mathcal F}L^q_s} } $ in ${\mathcal F} L^q_s ({\mathbf R}^n)$
satisfies $I_M = I_F \cap M^{p,q}_s ({\mathbf R}^n)$.
\end{proposition}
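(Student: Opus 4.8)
The plan is to prove the two inclusions separately, the inclusion $I_M \subset I_F \cap M^{p,q}_s({\mathbf R}^n)$ being immediate and the reverse being the substance. First I would record that $M^{p,q}_s({\mathbf R}^n)$ is dense in ${\mathcal F}L^q_s({\mathbf R}^n)$, since $C^\infty_c({\mathbf R}^n) \subset M^{p,q}_s({\mathbf R}^n)$ and $C^\infty_c({\mathbf R}^n)$ is dense in ${\mathcal F}L^q_s({\mathbf R}^n)$. Using this together with the continuity of multiplication on the algebra ${\mathcal F}L^q_s({\mathbf R}^n)$ (see \eqref{multiplication constant for FLqs}), I would check that $I_F = \overline{I_M}^{\|\cdot\|_{{\mathcal F}L^q_s}}$ is genuinely an ideal in ${\mathcal F}L^q_s({\mathbf R}^n)$: given $h \in {\mathcal F}L^q_s$ and $g \in I_F$, approximate $g$ in ${\mathcal F}L^q_s$ by $g_n \in I_M$ and $h$ by $h_m \in C^\infty_c({\mathbf R}^n) \subset M^{p,q}_s$; then $h_m g_n \in I_M$ by the ideal property in $M^{p,q}_s({\mathbf R}^n)$, and $h_m g_n \to hg$ in ${\mathcal F}L^q_s$ by joint continuity of the product, so $hg \in I_F$. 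Finally $I_M \subset I_F$ by definition and $I_M \subset M^{p,q}_s$, whence $I_M \subset I_F \cap M^{p,q}_s({\mathbf R}^n)$.

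For the reverse inclusion I would fix $f \in I_F \cap M^{p,q}_s({\mathbf R}^n)$ and $\varepsilon > 0$. By the approximate-unit property (Lemma \ref{approximate unit for Mpqs}) I would choose $\phi \in C^\infty_c({\mathbf R}^n)$ with $\|\phi f - f\|_{M^{p,q}_s} < \varepsilon$. Since $f \in I_F$, I would pick $f_n \in I_M$ with $\|f_n - f\|_{{\mathcal F}L^q_s} \to 0$. Because $\phi \in M^{p,q}_s$ and $I_M$ is an ideal in $M^{p,q}_s$, each $\phi f_n$ lies in $I_M$; moreover $\phi f_n$ and $\phi f$ all have support contained in the single compact set ${\rm supp}\,\phi$, and $\phi f_n \to \phi f$ in ${\mathcal F}L^q_s$ by continuity of multiplication by $\phi$.

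The decisive point will be to upgrade this ${\mathcal F}L^q_s$-convergence to $M^{p,q}_s$-convergence, which is where the topology mismatch (the $M^{p,q}_s$-norm being the stronger one) must be overcome. This is exactly what Lemma \ref{inclusion relation (FLqs)_c} provides once the supports are held in a fixed compact set: choosing a single $\psi \in {\mathcal S}({\mathbf R}^n)$ with $\psi \equiv 1$ on a neighborhood of ${\rm supp}\,\phi$, the estimate in that lemma gives $\|\phi f_n - \phi f\|_{M^{p,q}_s} \lesssim \|\psi\|_{M^{p,q}_s}\,\|\phi f_n - \phi f\|_{{\mathcal F}L^q_s}$ with a constant independent of $n$, since $\phi f_n - \phi f$ is supported in ${\rm supp}\,\phi$ for every $n$. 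Hence $\phi f_n \to \phi f$ in $M^{p,q}_s$, and as $I_M$ is closed in $M^{p,q}_s$ I obtain $\phi f \in I_M$. Combined with $\|\phi f - f\|_{M^{p,q}_s} < \varepsilon$, this shows $f$ lies within $\varepsilon$ of $I_M$ in the $M^{p,q}_s$-norm; letting $\varepsilon \to 0$ and using that $I_M$ is closed yields $f \in I_M$. The main obstacle is precisely this last bridging step, and the key trick is that multiplying by the fixed cutoff $\phi$ confines every approximant to one compact set, where Lemma \ref{inclusion relation (FLqs)_c} makes the two norms comparable with a uniform constant.
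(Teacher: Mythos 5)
Your proof is correct, and it is organized differently from the paper's, though both hinge on the same bridging estimate. The paper first establishes the intermediate identity $I_F = \overline{I_M \cap ({\mathcal F}L^q_s)_c}^{\|\cdot\|_{{\mathcal F}L^q_s}}$: this serves simultaneously to verify that $I_F$ is an ideal (for $f_n \in I_M \cap ({\mathcal F}L^q_s)_c$ and $g \in {\mathcal F}L^q_s$ one writes $f_n g = f_n \cdot \psi_n g$ with a per-element cutoff $\psi_n$, so that $\psi_n g \in ({\mathcal F}L^q_s)_c \hookrightarrow M^{p,q}_s$ and the ideal property of $I_M$ applies) and to supply a \emph{compactly supported} approximant $h \in I_M \cap ({\mathcal F}L^q_s)_c$ in the reverse inclusion, where the decisive estimate is the chain $\|\varphi\phi(f-h)\|_{M^{p,q}_s} \lesssim \|\varphi\|_{M^{p,q}_s}\|\phi(f-h)\|_{M^{\infty,q}_s} \lesssim \|\varphi\|_{M^{p,q}_s}\|\phi\|_{{\mathcal F}L^q_s}\|f-h\|_{{\mathcal F}L^q_s}$, using Lemma \ref{product estimate for Mpqs} and the embedding ${\mathcal F}L^q_s \hookrightarrow M^{\infty,q}_s$ from the proof of Lemma \ref{inclusion relation (FLqs)_c}. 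You bypass the intermediate ideal entirely: you verify the ideal property of $I_F$ by a double approximation (multiplier approximated in $C^\infty_c$, which is slightly more economical than the paper's $\psi_n$ device), and in the reverse inclusion you cut off the approximating sequence itself, $\phi f_n \to \phi f$, upgrading the convergence via the fixed-compact-support comparison of norms and concluding $\phi f \in I_M$ from closedness rather than exhibiting an explicit $\varepsilon$-approximant. These are the same mechanism in different packaging — your $\psi$ plays exactly the role of the paper's $\varphi$. One point of care, which you in fact flag correctly: Lemma \ref{inclusion relation (FLqs)_c} as stated gives only an embedding $({\mathcal F}L^q_s)_c \hookrightarrow M^{p,q}_s$ whose constant necessarily depends on the support (no single constant can work uniformly over all compact sets), so what you are really invoking is the estimate inside its proof, $\|g\|_{M^{p,q}_s} \lesssim \|\psi\|_{M^{p,q}_s}\|g\|_{{\mathcal F}L^q_s}$ for all $g$ supported where $\psi \equiv 1$, with constant depending only on the fixed $\psi$; a clean write-up should cite the proof rather than the statement. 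The paper's route buys the reusable structural fact $I_F = \overline{I_M \cap ({\mathcal F}L^q_s)_c}$, which is of independent interest alongside Lemma \ref{ideal theorem lemma 1}; your route buys a shorter, more self-contained argument for the proposition itself, losing nothing for the application to Theorem \ref{ideal theorem of segal algebra}.
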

\begin{proof}
We start by observing that 
$I_F^\prime := \overline{ I_M \cap ({\mathcal F}L^q_s)_c  }^{ \| \cdot \|_{{\mathcal F}L^{q}_s} }$
 is a closed ideal in ${\mathcal F}L^q_s ({\mathbf R}^n)$.
In fact,  for $f \in I_F^\prime$ and $g \in {\mathcal F}L^q_s ({\mathbf R}^n)$ there exists $\{ f_n \}^\infty_{n=1}$ in  $I_M \cap ({\mathcal F}L^q_s)_c$
such that $\| f - f_n \|_{{\mathcal F}L^q_s} \to 0$ $ (n \to \infty)$.
Since $f_n \in ( {\mathcal F}L^q_s )_c$,
there exists $\psi_n \in C^\infty_c ({\mathbf R}^n)$
such that $\psi_n(x) =1$ on ${\rm supp~} f_n$.
Then we have
$\psi_n g \in ( {\mathcal F}L^q_s )_c \hookrightarrow M^{p,q}_s({\mathbf R}^n)$.
Therefore, $\psi_n g \cdot f_n \in I_M$, and thus 
$f_ng = f_n \cdot \psi_n g \in I_M \cap ({\mathcal F}L^q_s)_c$.
Furthermore, since 
$
\| fg -f_n g \|_{{\mathcal F}L^q_s} \lesssim
\| f - f_n \|_{{\mathcal F}L^q_s} \| g \|_{{\mathcal F}L^q_s}
\to 0 \ (n \to \infty),
$
and thus $fg \in I_F^\prime$. Hence, $I_F^\prime$ is an ideal in
${\mathcal F}L^q_s ({\mathbf R}^n)$.
We next  prove $I_F = I_F^\prime$.
It suffices to prove $I_F \subset I_F^\prime$.
Given $f \in I_F$ and  $\varepsilon >0$ 
there exists $g \in I_M$ such that 
$\| f -g \|_{{\mathcal F}L^q_s} < \varepsilon$.
By Lemma \ref{approximate unit for Mpqs}
there exists $\phi \in C^\infty_c({\mathbf R}^n)$
such that $\| g - \phi g \|_{M^{p,q}_s} < \varepsilon$.
We note that $\phi g \in I_M \cap ({\mathcal F}L^q_s)_c $
by $\phi g \in I_M \subset M^{p,q}_s ({\mathbf R}^n) 
\hookrightarrow {\mathcal F}L^q_s ({\mathbf R}^n)$.
Since
$
\| f - \phi g \|_{{\mathcal F}L^q_s}
\lesssim
\| f - g \|_{{\mathcal F}L^q_s} + \| \phi g - g \|_{M^{p,q}_s} 
\lesssim \varepsilon, $
we obtain $I_F \subset I_F^\prime$.
Finally, we prove 
$I_M = I_F \cap M^{p,q}_s({\mathbf R}^n)$.
It suffices to show $I_F \cap M^{p,q}_s({\mathbf R}^n) \subset I_M$.
Let $f \in I_F \cap M^{p,q}_s({\mathbf R}^n)$
and $\varepsilon >0$. 
By Lemma \ref{approximate unit for Mpqs}
there exists $\phi \in C^\infty_c({\mathbf R}^n)$
such that $\| f -\phi f \|_{M^{p,q}_s} < \varepsilon$.
Take $\varphi \in C^\infty_c({\mathbf R}^n)$
with $\varphi(x)=1$ on ${\rm supp~} \phi $.
Since
$f \in I_F$ there exists $h \in I_M \cap ( {\mathcal F}L^q_s )_c$
such that  
$\| f - h \|_{{\mathcal F}L^q_s} <
\varepsilon /   (\| \varphi \|_{M^{p,q}_s} \| \phi \|_{{\mathcal F}L^q_s} )$.
Then $\phi h \in I_M$.
Note that the proof of Lemma \ref{inclusion relation (FLqs)_c}
implies ${\mathcal F}L^q_s ({\mathbf R}^n) \hookrightarrow 
M^{\infty,q}_s ({\mathbf R}^n)$.
Thus
\begin{align*}
\| f - \phi h \|_{M^{p,q}_s}
& \leq
\| f - \phi f \|_{M^{p,q}_s} + \| \varphi \phi (f-h) \|_{M^{p,q}_s} \\
&
\lesssim 
\| f - \phi f \|_{M^{p,q}_s} + \| \varphi \|_{M^{p,q}_s} \| \phi (f-h) \|_{M^{\infty,q}_s} \\
&
\lesssim 
\| f - \phi f \|_{M^{p,q}_s} + \| \varphi \|_{M^{p,q}_s} \| \phi (f-h) \|_{{\mathcal F}L^q_s} 
\\
&
\lesssim 
\| f - \phi f \|_{M^{p,q}_s} + \| \varphi \|_{M^{p,q}_s} \| \phi \|_{{\mathcal F}L^q_s} 
\| f-h \|_{{\mathcal F}L^q_s}.
\end{align*}
Therefore $f  \in \overline{ I_M }^{ \| \cdot \|_{M^{p,q}_s}  } = I_M $.
Hence
$I_F \cap M^{p,q}_s({\mathbf R}^n) \subset I_M$.
\end{proof}
\begin{remark}
Let $I_M$ and $I^\prime_M$
be closed ideals in $M^{p,q}_s ({\mathbf R}^n)$, and
$I_F$ be the closure of $I_M$ in ${\mathcal F}L^q_s ({\mathbf R}^n)$.
If the closure of $I^\prime_M$ in ${\mathcal F}L^q_s ({\mathbf R}^n)$
is equal to $I_F$, then
Proposition \ref{ideal theorem} implies that
$I_M = I^\prime_M$. 
\end{remark}

Combining these results, we obtain the
``ideal theory for Segal algebras''.
\begin{theorem}
\label{ideal theorem of segal algebra}
Let $1 \leq p \leq 2$. Suppose that
$q=1$ and $s \geq 0$, or $1<q \leq p^\prime$ and
$s> n / q^\prime$.
Let ${\mathcal I}_F$ be the set of all closed ideals in ${\mathcal F}L^q_s ({\mathbf R}^n)$,
and ${\mathcal I}_M$ be the set of all closed ideals in 
$M^{p,q}_s({\mathbf R}^n)$. 
Then the map $\iota : {\mathcal I}_F \to 
{\mathcal I}_M$, $\iota(I_F) = I_F \cap M^{p,q}_s ({\mathbf R}^n) $
$(I_F \in {\mathcal I}_F)$
is bijective.
More precisely, we have
$\iota^{-1} (I_M) = \overline{I_M}^{ \| \cdot \|_{{\mathcal F}L^q_s}  }$
and
$
\iota (   \overline{I_M}^{ \| \cdot \|_{{\mathcal F}L^q_s}  } ) 
=  \overline{I_M}^{ \| \cdot \|_{{\mathcal F}L^q_s}  }
 \cap M^{p,q}_s ({\mathbf R}^n)
$
for $I_M \in {\mathcal I}_M $. 
\end{theorem}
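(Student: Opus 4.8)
The plan is to assemble Lemma \ref{ideal theorem lemma 2} and Proposition \ref{ideal theorem}, which together already carry all the analytic content. Once those are in hand, the theorem reduces to checking that $\iota$ is a well-defined bijection with the advertised inverse, so I would simply verify well-definedness, injectivity, surjectivity, and the inverse formula in turn.

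First I would confirm that $\iota$ actually lands in $\mathcal{I}_M$. Given $I_F \in \mathcal{I}_F$, part $(i)$ of Lemma \ref{ideal theorem lemma 2} states precisely that $I_F \cap M^{p,q}_s(\mathbf{R}^n)$ is a closed ideal in $M^{p,q}_s(\mathbf{R}^n)$, so $\iota(I_F) \in \mathcal{I}_M$ and $\iota$ is well defined. Injectivity is then immediate from part $(ii)$ of the same lemma: if $\iota(I_F) = \iota(I_F^\prime)$, that is $I_F \cap M^{p,q}_s(\mathbf{R}^n) = I_F^\prime \cap M^{p,q}_s(\mathbf{R}^n)$, then $I_F = I_F^\prime$.

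For surjectivity and the inverse formula I would fix $I_M \in \mathcal{I}_M$ and set $I_F := \overline{I_M}^{\|\cdot\|_{\mathcal{F}L^q_s}}$. Proposition \ref{ideal theorem} shows that this $I_F$ is a closed ideal in $\mathcal{F}L^q_s(\mathbf{R}^n)$, hence $I_F \in \mathcal{I}_F$, and that $\iota(I_F) = I_F \cap M^{p,q}_s(\mathbf{R}^n) = I_M$. This gives surjectivity at once, and reading the identity in reverse yields $\iota^{-1}(I_M) = \overline{I_M}^{\|\cdot\|_{\mathcal{F}L^q_s}}$, from which $\iota(\overline{I_M}^{\|\cdot\|_{\mathcal{F}L^q_s}}) = \overline{I_M}^{\|\cdot\|_{\mathcal{F}L^q_s}} \cap M^{p,q}_s(\mathbf{R}^n)$ follows by the definition of $\iota$.

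Since the hard estimates have all been discharged in the preparatory results, no serious obstacle remains at this stage. The only point demanding care is to keep the two different closures straight: membership in $\mathcal{I}_M$ and the closure defining $I_M$ are taken in the $M^{p,q}_s$-norm, whereas $I_F = \overline{I_M}^{\|\cdot\|_{\mathcal{F}L^q_s}}$ is a closure in the coarser $\mathcal{F}L^q_s$-norm, and one must invoke the embedding $M^{p,q}_s(\mathbf{R}^n) \hookrightarrow \mathcal{F}L^q_s(\mathbf{R}^n)$ of Lemma \ref{inclusion Mpqs subset FLqs} to know that $I_F$ is a genuine closed \emph{ideal} in $\mathcal{F}L^q_s(\mathbf{R}^n)$ rather than a mere closed subspace. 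That verification is exactly what Proposition \ref{ideal theorem} provides, so it can be cited rather than redone.
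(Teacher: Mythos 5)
Your proposal is correct and matches the paper's own (implicit) argument: the paper proves the theorem simply by ``combining these results,'' namely Lemma \ref{ideal theorem lemma 2} for well-definedness and injectivity and Proposition \ref{ideal theorem} for surjectivity and the inverse formula, exactly as you do. Your closing remark is also on point: the fact that $\overline{I_M}^{\|\cdot\|_{\mathcal{F}L^q_s}}$ is a genuine ideal in ${\mathcal F}L^q_s({\mathbf R}^n)$ is not automatic and is precisely what the first part of the proof of Proposition \ref{ideal theorem} establishes, so citing it there is the right move.
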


\subsubsection{The proof of Theorem \ref{spectral synthesis for Mpqs}}
For a closed subset $K$ of  ${\mathbf R}^n$,
we set $I_F(K) :=  \{ f \in {\mathcal F}L^q_s ({\mathbf R}^n) ~|~ f|_K =0 \}$ 
and $I_M(K) :=  \{ f \in M^{p,q}_s ({\mathbf R}^n) ~|~ f|_K =0 \}$.
Moreover,  we define $J_F(K)$ by the closure of 
$
 \{ f \in {\mathcal F}L^q_s ({\mathbf R}^n) ~|~ f (x) =0 {\rm ~in~ a~neighborhood~of~} K  \}
$
 in ${\mathcal F}L^q_s({\mathbf R}^n)$, 
and 
$J_M(K)$ by the 
closure of 
$
 \{ f \in M^{p,q}_s ({\mathbf R}^n) ~|~ f(x) =0 {\rm ~in~ a~neighborhood~of~} K  \}$
 in $M^{p,q}_s({\mathbf R}^n)$.
Then Theorem \ref{ideal theorem of segal algebra} shows
$I_F(K) = J_M(K)$ if and only if 
$I_F(K) = J_F(K) $.
Hence, $K$ is a set of spectral synthesis for $M^{p,q}_s({\mathbf R}^n)$ if and only if $K$ is a set of spectral synthesis for ${\mathcal F}L^q_s ({\mathbf R}^n)$. 

\section{Wiener-L\'evy theorem}
\label{section Wiener Levy theorem}
We only prove Theorem \ref{wiener-levy for FLqs}
because a slight change in the proof of 
Theorem \ref{wiener-levy for FLqs}
shows Theorem \ref{wiener-levy for Mpqs}
(cf. Remark \ref{rem modulation version}).
We first recall the following lemma
and prepare a local version of Theorem \ref{wiener-levy for FLqs}.

\begin{lemma}[{\cite[Theorem 4.13]{Reich Sickel}}]
\label{composition on Mpqs}
Let $1<p<\infty$, $1 \leq q \leq \infty$ and $s>n/q^\prime$.
Let $\mu$ be a complex measure on $\mathbf R$
such that
$$
\int_{\mathbf R}
(1+|\xi|)^{ 1+(s+n/q)  (1+ \frac{1}{ s- n/q^\prime })  } d |\mu| (\xi) < \infty
$$
and such that $\mu({\mathbf R}) =0$.
Let $F$ be the inverse Fourier 
transform of $\mu$. 
Then $F(f) \in M^{p,q}_s({\mathbf R}^n)$ holds for all real-valued 
$f \in M^{p,q}_s({\mathbf R}^n)$.  
\end{lemma}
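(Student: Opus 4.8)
The plan is to reduce the composition statement to a single quantitative bound on the modulation norm of $e^{i\xi f}$ and then to integrate that bound against $|\mu|$. Since $\mu(\mathbf{R})=0$, the inverse Fourier transform $F=\mathcal{F}^{-1}\mu$ satisfies $F(0)=0$ and, for a real-valued $f$,
$$
F(f(x))=\frac{1}{2\pi}\int_{\mathbf{R}} e^{i\xi f(x)}\, d\mu(\xi)=\frac{1}{2\pi}\int_{\mathbf{R}}\big(e^{i\xi f(x)}-1\big)\, d\mu(\xi),
$$
where subtracting the constant $1$ is legitimate precisely because $\mu(\mathbf{R})=0$, and is in fact necessary, since the constant function does not belong to $M^{p,q}_s(\mathbf{R}^n)$ when $p<\infty$. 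Applying the Minkowski integral inequality in the $M^{p,q}_s$-norm, I would bound
$$
\|F(f)\|_{M^{p,q}_s}\le\frac{1}{2\pi}\int_{\mathbf{R}}\big\|e^{i\xi f}-1\big\|_{M^{p,q}_s}\, d|\mu|(\xi).
$$
Thus the whole lemma follows once I prove the key estimate
$$
\big\|e^{i\xi f}-1\big\|_{M^{p,q}_s}\le C_f\,(1+|\xi|)^{\gamma},\qquad \gamma=1+(s+n/q)\Big(1+\tfrac{1}{s-n/q'}\Big),
$$
with $C_f$ depending only on $\|f\|_{M^{p,q}_s}$, because the moment hypothesis on $\mu$ is exactly $\int_{\mathbf{R}}(1+|\xi|)^{\gamma}\,d|\mu|<\infty$.

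For small frequencies this is easy. Since $M^{p,q}_s(\mathbf{R}^n)$ is a multiplication algebra, iterating the product estimate of Lemma \ref{product estimate for Mpqs} gives $\|f^k\|_{M^{p,q}_s}\lesssim C^{k-1}\|f\|_{M^{\infty,q}_s}^{k-1}\|f\|_{M^{p,q}_s}$, so expanding $e^{i\xi f}-1=\sum_{k\ge1}\frac{(i\xi)^k}{k!}f^k$ yields a bound $\lesssim|\xi|$ for $|\xi|\le1$. The difficulty lies entirely at large $|\xi|$: the same power-series estimate only produces $\exp\!\big(c|\xi|\,\|f\|_{M^{\infty,q}_s}\big)$, which grows far too fast for the polynomial moment condition. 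The crux of the argument is therefore to replace this exponential bound by the polynomial one above.

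To obtain polynomial growth I would abandon the power series and instead exploit the boundedness $|e^{i\xi f}|=1$ coming from the real-valuedness of $f$. The scheme is a two-scale low/high-frequency decomposition of $e^{i\xi f}$ through the uniform partition $\{\varphi(D-k)\}$ used to define $M^{p,q}_s$: on the low-frequency block one controls $e^{i\xi f}-1$ by its pointwise size, losing only a fixed power of $|\xi|$, while on the high-frequency block one integrates by parts, each derivative of $e^{i\xi f}$ producing a factor $i\xi\,(\partial f)\,e^{i\xi f}$, so that the $\langle k\rangle^{s}$-weighted high-frequency pieces decay once $\langle k\rangle$ exceeds a threshold proportional to a power of $|\xi|$. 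Optimizing the cutoff between the two regimes balances the $|\xi|$-loss from the low block against the smoothness gain from the high block, and this optimization is exactly what produces the exponent $\gamma$; in particular the factor $\big(1+\tfrac{1}{s-n/q'}\big)$ is the unavoidable loss arising because $s$ lies only just above the critical exponent $n/q'$, where the geometric summation over frequency scales degenerates.

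The main obstacle is precisely this polynomial bound at large $\xi$: it carries the genuine nonlinear content of the lemma and explains why the result demands the quantitative moment condition on $\mu$ rather than mere smoothness of $F$. Once it is established, the reduction above together with Minkowski's inequality completes the proof, and this estimate is the analytic engine that ultimately powers the Wiener--L\'evy statement of Theorem \ref{wiener-levy for FLqs} and its modulation-space counterpart.
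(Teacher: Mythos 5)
Your overall reduction is the right one, and it matches the architecture of the proof in the cited source: the paper itself does not prove this lemma at all, but imports it verbatim as Theorem 4.13 of Reich--Sickel, and that proof indeed proceeds by writing $F(f(x))=\frac{1}{2\pi}\int_{\mathbf{R}}(e^{i\xi f(x)}-1)\,d\mu(\xi)$ (using $\mu(\mathbf{R})=0$, and your remark that subtracting the constant is necessary because $1\notin M^{p,q}_s$ for $p<\infty$ is correct), applying Minkowski's integral inequality, and reducing everything to the bound $\|e^{i\xi f}-1\|_{M^{p,q}_s}\le C_f(1+|\xi|)^{\gamma}$ with $\gamma=1+(s+n/q)\bigl(1+\frac{1}{s-n/q^\prime}\bigr)$, which is precisely the moment exponent in the hypothesis.

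However, there is a genuine gap at exactly the point you yourself flag as the crux: the polynomial bound for large $|\xi|$ is asserted with a heuristic that, as stated, would fail. Your proposed mechanism --- a low/high-frequency decomposition of $e^{i\xi f}$ with integration by parts producing factors $i\xi\,(\partial f)\,e^{i\xi f}$ --- presupposes that $f$ is differentiable with $\partial f$ in a usable space; but for $s$ only slightly above $n/q^\prime$ a general real-valued $f\in M^{p,q}_s(\mathbf{R}^n)$ is merely continuous and bounded, not differentiable, and the frequency blocks $\varphi(D-k)$ do not interact with the nonlinearity $u\mapsto e^{i\xi u}$ in the way your sketch needs. The actual argument (in Reich--Sickel, and in Bhimani--Ratnakumar for the unweighted case) splits $f$ itself rather than $e^{i\xi f}$: write $f=f_1+f_2$ with $f_1=\sum_{|k|\le R}\varphi(D-k)f$ band-limited (hence smooth, with derivative norms growing polynomially in $R$) and $\|f_2\|_{M^{\infty,q}_s}\lesssim R^{-(s-n/q^\prime)}\|f\|_{M^{p,q}_s}$; one then factors $e^{i\xi f}=e^{i\xi f_1}\,e^{i\xi f_2}$, chooses $R\sim(1+|\xi|)^{1/(s-n/q^\prime)}$ so that $|\xi|\,\|f_2\|_{M^{\infty,q}_s}\lesssim 1$ and the power series for $e^{i\xi f_2}-1$ converges with uniformly bounded sum, and estimates $e^{i\xi f_1}$ polynomially using its smoothness together with $|e^{i\xi f_1}|=1$ (this is where real-valuedness enters). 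Multiplying the two factors back with the algebra property produces exactly the exponent $\gamma$; your balancing heuristic gives the right answer only because it shadows this factorization. Without that step your outline does not close, so as written the proposal reduces the lemma correctly but does not prove it.
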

\begin{lemma}
\label{local wiener-levy for FLqs}
Given $1<q<\infty$, $s> n/q^\prime$,
a real-valued function
$f \in  {\mathcal F}L^q_s({\mathbf R}^n)$
and a compact subset
$K \subset {\mathbf R}^n$.
Suppose that $F \in {\mathcal S}({\mathbf R})$ and
$F(0)=0$. Then there exists $g \in {\mathcal F}L^q_s({\mathbf R}^n)$
such that $g(x) = F(f(x))$ for all $x \in K$.
\end{lemma}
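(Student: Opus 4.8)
The plan is to reduce this statement about the Fourier--Wermer algebra to the composition result for modulation spaces recorded in Lemma \ref{composition on Mpqs}, by localizing $f$ near $K$ and transferring between ${\mathcal F}L^q_s$ and $M^{p,q}_s$ through the embedding lemmas. The guiding observation is that $F(f)$ only has to be produced \emph{on} $K$, so we are free to replace $f$ by any function agreeing with it near $K$; in particular a compactly supported truncation is available, which is exactly what is needed to enter a modulation space.

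First I would fix a real-valued cutoff $\phi \in C^\infty_c({\mathbf R}^n)$ with $\phi \equiv 1$ on a neighborhood of $K$. Since $C^\infty_c({\mathbf R}^n) \subset {\mathcal F}L^q_s({\mathbf R}^n)$ and the latter is a multiplication algebra, $\phi f \in {\mathcal F}L^q_s({\mathbf R}^n)$ has compact support, i.e. $\phi f \in ({\mathcal F}L^q_s)_c$, and it is real-valued because $f$ and $\phi$ are. Next I would choose an auxiliary exponent $p$ with $1 < p \le \min\{2, q^\prime\}$; such $p$ exists precisely because $q<\infty$ forces $q^\prime>1$, so the interval is nonempty. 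With this choice Lemma \ref{inclusion relation (FLqs)_c} gives $\phi f \in ({\mathcal F}L^q_s)_c \hookrightarrow M^{p,q}_s({\mathbf R}^n)$, so $\phi f$ is a real-valued element of $M^{p,q}_s$.

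To apply Lemma \ref{composition on Mpqs} I would realize $F$ as an inverse Fourier transform of a suitable measure. Because $F \in {\mathcal S}({\mathbf R})$, its Fourier transform $\widehat F \in {\mathcal S}({\mathbf R})$, and setting $d\mu = \widehat F \, d\xi$ gives $F = {\mathcal F}^{-1}\mu$ with $\int_{\mathbf R}(1+|\xi|)^{M}\,d|\mu|(\xi)<\infty$ for every $M$, in particular for the exponent demanded in Lemma \ref{composition on Mpqs}; moreover $\mu({\mathbf R})$ equals $F(0)$ up to a fixed nonzero multiplicative constant, so $\mu({\mathbf R})=0$. Hence Lemma \ref{composition on Mpqs} applies to the real-valued function $\phi f \in M^{p,q}_s$ and yields $F(\phi f) \in M^{p,q}_s({\mathbf R}^n)$, where $F(\phi f)$ is understood as the pointwise composition $x \mapsto F((\phi f)(x))$. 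Since $1<p\le 2$ and $q\le p^\prime$, Lemma \ref{inclusion Mpqs subset FLqs} gives $M^{p,q}_s \hookrightarrow {\mathcal F}L^q_s$, so $g := F(\phi f) \in {\mathcal F}L^q_s({\mathbf R}^n)$. Finally, on $K$ we have $\phi \equiv 1$, hence $(\phi f)(x)=f(x)$ and $g(x)=F((\phi f)(x))=F(f(x))$ for every $x \in K$, as required.

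The step I expect to be most delicate is the bookkeeping with the exponent $p$: I must select a single $p$ making both embeddings valid, namely $({\mathcal F}L^q_s)_c \hookrightarrow M^{p,q}_s$ and $M^{p,q}_s \hookrightarrow {\mathcal F}L^q_s$, while also staying in the range $1<p<\infty$ required by Lemma \ref{composition on Mpqs}. The constraints $p\le 2$ and $q\le p^\prime$ (equivalently $p\le q^\prime$) are exactly what force $1<p\le\min\{2,q^\prime\}$, and the only thing to check is that this interval is nonempty, which holds since $q^\prime>1$. Beyond this, one should confirm that the functional calculus supplied by Lemma \ref{composition on Mpqs} really delivers the genuine pointwise composition, so that the identity $g(x)=F(f(x))$ on $K$ is literal rather than merely an identification of distributions.
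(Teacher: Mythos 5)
Your proof is correct and follows the same overall strategy as the paper's: localize $f$ near $K$ by a compactly supported cutoff, apply the Reich--Sickel composition result (Lemma \ref{composition on Mpqs}) inside a modulation space, and transfer back to ${\mathcal F}L^q_s({\mathbf R}^n)$. The genuine difference lies in how the transfer is justified. The paper invokes the identity \eqref{equality of compact supported function space}, $({\mathcal F}L^q_s)_c=\{f\in M^{p,q}_s \ |\ \operatorname{supp} f \text{ compact}\}$ (quoted from Kato--Sugimoto--Tomita and Okoudjou), in \emph{both} directions: once to place $\tau f$ in $M^{p,q}_s$, and once more after observing that $F(\tau f)$ itself has compact support --- which is where the paper uses $F(0)=0$ a second time, since $F(\tau f)$ vanishes off $\operatorname{supp}(\tau f)$; with this route no constraint linking $p$ to $q$ arises. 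You instead use only embeddings proved within the paper: Lemma \ref{inclusion relation (FLqs)_c} to enter $M^{p,q}_s$ and Lemma \ref{inclusion Mpqs subset FLqs} to exit, at the price of fixing an auxiliary exponent $1<p\le\min\{2,q^\prime\}$ so that $p\le 2$ and $q\le p^\prime$ hold simultaneously; your check that this interval is nonempty (since $q<\infty$ forces $q^\prime>1$) is exactly the needed bookkeeping, and your explicit verification of the hypotheses of Lemma \ref{composition on Mpqs} (all polynomial moments of $|\widehat F|$ finite for $d\mu=\widehat F\,d\xi$, and $\mu({\mathbf R})=2\pi F(0)=0$) spells out what the paper leaves implicit. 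Net effect: your argument is more self-contained relative to the paper's internal lemmas and dispenses with the support argument for $F(\tau f)$ entirely, while the paper's argument is free of any restriction on $p$ but leans on the externally cited equality \eqref{equality of compact supported function space}; both are complete proofs of the lemma.
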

\begin{proof}
Take a real-valued function $\tau \in C^\infty_c ({\mathbf R}^n) $ with 
$\tau  (x) =1$ on $K$.
Then $\tau f \in {\mathcal F}L^q_s ({\mathbf R}^n)$.
Since
\begin{equation}
\label{equality of compact supported function space}
({\mathcal F}L^q_s)_c
= \{ f \in  M^{p,q}_s({\mathbf R}^n) ~|~ {\rm supp~} f ~ {\rm is~compact} \}
\end{equation}
(cf. \cite[Lemma A.1]{Kato Sugimoto Tomita}, \cite[Lemma 1]{Okoudjou}), 
we have
 $\tau f \in M^{p,q}_s ({\mathbf R}^n)$
 and thus $F( \tau f  ) \in M^{p,q}_s ({\mathbf R}^n)$
 by Lemma \ref{composition on Mpqs}.
Note that
$F \in {\mathcal S} ({\mathbf R})$
and ${\rm supp}(\tau f)$ is compact.
Thus ${\rm supp} (F(\tau f))$ is compact.
By  \eqref{equality of compact supported function space} 
we have  $F(\tau f) \in {\mathcal F}L^q_s({\mathbf R}^n)$.
Now set $g= F(\tau f)$. 
Then $g \in {\mathcal F}L^q_s ({\mathbf R}^n)$
and
$g(x) = F( \tau (x)  f(x)  ) = F(f(x))$ $(x \in K)$.
\end{proof}

\subsection{The proof of Theorem \ref{wiener-levy for FLqs}}

Since $F$ is analytic on a neighborhood of $0$ with $F(0)=0$,
there exists $\varepsilon_0>0$ such that
$F(z)$ has the power series representation
$ F(z) = \sum^\infty_{j=1} c_j z^j $
$ (|z| < \varepsilon_0).$
Take $\phi \in C^\infty_c({\mathbf R}^n)$ such that
$ \| f - \phi f \|_{ {\mathcal F}L^q_s }< \varepsilon /c $
for any $\varepsilon$ with $0< \varepsilon < \varepsilon_0$
(see Lemma \ref{approximate unit for FLqs}),
where $c$ is the constant as in \eqref{multiplication constant for FLqs}.
Now we set  $
g_0(x) := \sum^\infty_{j=1} c_j \big( f (x) - \phi (x) f (x)  \big)^j. $
Then $g_0 \in  {\mathcal F}L^q_s({\mathbf R}^n)$ and
$g_0(x) = F( f(x) - \phi(x) f(x))   =F(f(x))$ $(x \not\in {\rm supp~}\phi)$.
On the other hand, let $\tau_0, \tau_1 \in C^\infty_c({\mathbf R}^n)$ be
such that $\tau_0 (x) =1$
 on ${\rm supp~} \phi$ and $\tau_1(x) =1$ on ${\rm supp~} \tau_0$.
Take $\psi  \in C^\infty_c({\mathbf R}^n)$
with $\psi (y) =1$ for all $ y \in  \{\tau_1(x) f(x)  ~|~  x \in {\rm supp~} \tau_1 \}$.
We note $f \in 
L^\infty({\mathbf R}^n)$.
Moreover one has
$G := \psi F \in C^\infty_c({\mathbf R})$, $G (0)=0$
 and
$$
F(f(x))
= \psi ( \tau_1 (x) f(x) )F( \tau_1(x) f(x)  ) 
= G( \tau_1 (x) f(x) )
\quad
(x \in {\rm supp~} \tau_0).
$$
By Lemma \ref{local wiener-levy for FLqs} with 
$K= {\rm supp~} \tau_0$ and
$\tau_1 f \in {\mathcal F}L^q_s({\mathbf R}^n)$,
there exists exists $g_1 \in {\mathcal F}L^q_s({\mathbf R}^n)$
such that
$g_1(x) = F(f(x))$ on  ${\rm supp~} \tau_0$.
Set
$
g(x) :=  (1- \tau_0 (x))g_0 (x) + \tau_0 (x)  g_1 (x).
$
Then $g \in {\mathcal F}L^q_s({\mathbf R}^n)$.
If $x \in {\rm supp~} \phi$, then
$\tau_0(x)=1$ and $g_1(x) =F(f(x))$.
Thus $g(x) =F(f(x))$.
Moreover,  if $x  \in {\rm supp~} \tau_0 \setminus   {\rm supp~}\phi$,
then  $g_0(x)  = F( f(x) - \phi (x) f(x)) = F(f(x))$ and $g_1(x) = F(f(x))$.
Thus $g(x) = F(f(x))$. If $x \not\in {\rm supp~} \tau_0$, then
$\tau_0 (x)=0$, $g_0 (x) = F(f(x))$
and
$g(x) = F(f(x))$.
\begin{remark}
\label{rem modulation version}
Applying  Lemma \ref{approximate unit for Mpqs} and
Lemma \ref{composition on Mpqs} to $\tau_1 f$
for a real-valued function  $f \in M^{p,q}_s({\mathbf R}^n)$,
we can prove Theorem \ref{wiener-levy for Mpqs} similarly.
\end{remark}

\section*{Acknowledgment}
This work was supported by JSPS KAKENHI Grant Numbers 22K03328, 22K03331.

\end{document}